\begin{document}

\newtheorem{tm}{Theorem}[section]
\newtheorem{prop}[tm]{Proposition}
\newtheorem{defin}[tm]{Definition}
\newtheorem{coro}[tm]{Corollary}

\newtheorem{lem}[tm]{Lemma}
\newtheorem{assumption}[tm]{Assumption}

\newtheorem{rk}[tm]{Remark}

\newtheorem{nota}[tm]{Notation}
\numberwithin{equation}{section}

\newcommand{\stk}[2]{\stackrel{#1}{#2}}
\newcommand{\dwn}[1]{{\scriptstyle #1}\downarrow}
\newcommand{\upa}[1]{{\scriptstyle #1}\uparrow}
\newcommand{\nea}[1]{{\scriptstyle #1}\nearrow}
\newcommand{\sea}[1]{\searrow {\scriptstyle #1}}
\newcommand{\csti}[3]{(#1+1) (#2)^{1/ (#1+1)} (#1)^{- #1
 / (#1+1)} (#3)^{ #1 / (#1 +1)}}
\newcommand{\RR}[1]{\mathbb{#1}}

\newcommand{ \bl}{\color{blue}}
\newcommand {\rd}{\color{red}}
\newcommand{ \bk}{\color{black}}
\newcommand{ \gr}{\color{OliveGreen}}
\newcommand{ \mg}{\color{RedViolet}}

\newcommand{\ep}{\varepsilon}
\newcommand{\rr}{{\mathbb R}}
\newcommand{\alert}[1]{\fbox{#1}}

\newcommand{\eqd}{\sim}
\def\R{{\mathbb R}}
\def\N{{\mathbb N}}
\def\Q{{\mathbb Q}}
\def\C{{\mathbb C}}
\def\l{{\langle}}
\def\r{\rangle}
\def\t{\tau}
\def\k{\kappa}
\def\a{\alpha}
\def\la{\lambda}
\def\De{\Delta}
\def\de{\delta}
\def\ga{\gamma}
\def\Ga{\Gamma}
\def\ep{\varepsilon}
\def\eps{\varepsilon}
\def\si{\sigma}
\def\Re {{\rm Re}\,}
\def\Im {{\rm Im}\,}
\def\E{{\mathbb E}}
\def\P{{\mathbb P}}
\def\Z{{\mathbb Z}}
\def\D{{\mathbb D}}
\newcommand{\ceil}[1]{\lceil{#1}\rceil}

\title{Existence of Traveling wave solutions to parabolic-elliptic-elliptic chemotaxis systems with logistic source}

\author{
Rachidi B. Salako and Wenxian Shen  \\
Department of Mathematics and Statistics\\
Auburn University\\
Auburn University, AL 36849\\
U.S.A. }

\date{}
\maketitle
\begin{abstract}
\noindent The current paper is devoted to the study of traveling wave solutions of the following parabolic-elliptic-elliptic chemotaxis systems,
\begin{equation}\label{main-eq-abstract}
\begin{cases}
u_{t}= \Delta u-\chi_1  \nabla \cdot (u \nabla v_1)+\chi_2  \nabla \cdot (u \nabla v_2) + u(a-bu),\quad x\in\mathbb{R}^N\\
0=\Delta v_1-\lambda_1v_1+\mu_1u, \quad x\in\mathbb{R}^N,\\
0=\Delta v_2-\lambda_2v_2+\mu_2u, \quad x\in\mathbb{R}^N,
\end{cases}
\end{equation}
where $a>0,\ b>0,$ $u(x,t)$ represents the population density of a mobile species, $v_1(x,t), $ represents the population density of a  chemoattractant, $v_2(x,t)$ represents the population density of a  chemo-repulsion, $\chi_1\geq 0$ and $\chi_2\geq 0$ represent the chemotaxis sensitivities, and the positive constants $\lambda_1,\lambda_2,\mu_1$, and $\mu_2$ are related to growth rate of the chemical substances. It was proved in an earlier work by the authors of the current paper that there is a nonnegative constant $K$ depending on the parameters $\chi_1,\mu_1,\lambda_1,\chi_2,\mu_2$, and $\lambda_2$ such that if $b+\chi_2\mu_2>\chi_1\mu_1+K$, then the positive constant steady solution $(\frac{a}{b},\frac{a\mu_1}{b\lambda_1},\frac{a\mu_2}{b\lambda_2})$ of \eqref{main-eq-abstract} is asymptotically stable with respect to positive perturbations. In the current paper, we prove that if $b+\chi_2\mu_2>\chi_1\mu_1+K$, then there exist a positive number $c^{*}(\chi_1,\mu_1,\lambda_1,\chi_2,\mu_2,\lambda_2)\geq 2\sqrt{a}$ such that for every $ c\in ( c^{*}(\chi_1,\mu_1,\lambda_1,\chi_2,\mu_2,\lambda_2)\ ,\ \infty)$ and $\xi\in S^{N-1}$, the system has a traveling wave solution $(u,v_1,v_2)=(U(x\cdot\xi-ct),V_1(x\cdot\xi-ct),V_2(x\cdot\xi-ct))$ with speed $c$ connecting the constant solutions $(\frac{a}{b},\frac{a\mu_1}{b\lambda_1},\frac{a\mu_2}{b\lambda_2})$ and $(0,0,0)$, and it does not have such traveling wave solutions of speed less than $2\sqrt a$. Moreover we prove that
\begin{equation*}
\lim_{(\chi_{1},\chi_2)\to (0^+,0^+)}c^{*}(\chi_1,\mu_1,\lambda_1,\chi_2,\mu_2,\lambda_2)=\begin{cases}
2\sqrt{a}\quad \text{if}\quad  a\leq \min\{\lambda_1, \lambda_2\}\\
\frac{a+\lambda_1}{\sqrt{\lambda_1}}\quad \text{if}\quad  \lambda_1\leq \min\{a, \lambda_2\}\\
\frac{a+\lambda_2}{\sqrt{\lambda_2}}\quad \text{if}\quad  \lambda_2\leq \min\{a, \lambda_1\}
\end{cases} \forall \lambda_1,\lambda_2,\mu_1,\mu_2>0,
\end{equation*}
and
\begin{equation*}
\lim_{x\to \infty}\frac{U(x)}{e^{-\sqrt a \mu x}}=1,
\end{equation*}
where $\mu$ is the only solution of the equation $\sqrt{a}(\mu+\frac{1}{\mu})=c$ in the interval $(0\ ,\ \min\{1,\sqrt{\frac{\lambda_1}{a}},\sqrt{\frac{\lambda_2}{a}}\})$.
\end{abstract}

\medskip
\noindent{\bf Key words.} Parabolic-Elliptic-Elliptic chemotaxis system, logistic source, spreading speed, traveling wave solution.

\medskip
\noindent {\bf 2010 Mathematics Subject Classification.}  35B35, 35B40, 35K57, 35Q92, 92C17.

\section{Introduction and Statement of the Main Results}

Chemotaxis is the ability for micro-organisms to respond to chemical signals by moving along gradient of  chemical substances, either toward the higher concentration (positive taxis)   or away from it (negative taxis). The following system of partial differential equations  describes the time evolution of both the density $u(x,t)$ of a mobile species and the density $v(x,t)$ of a  chemical substance,
\begin{equation}\label{IntroEq0}
\begin{cases}
u_{t}=\nabla\cdot (\nabla u- \chi(u)\nabla v) + u(a-bu),\quad   x\in\Omega \\
\tau v_t=\Delta v -v+u,\quad  x\in\Omega,
\end{cases}
\end{equation}
 complemented with certain boundary condition on $\partial\Omega$ if $\Omega$ is bounded, where $\Omega\subset \R^N$ is an open domain;  $\tau\ge 0$ is a non-negative constant linked to the speed of diffusion of the chemical; $a$ and $b$ are nonnegative constant real numbers related to the growth rate of the mobile species;  and the function $\chi(u)$ represents the sensitivity with respect to chemotaxis.  System \eqref{IntroEq0}  has attracted a number of researchers over the last three decades.  It is a simplified version of the mathematical model of chemotaxis (aggregation of organisms sensitive to a gradient of a chemical
substance) proposed by Keller and Segel  (see \cite{KeSe1}, \cite{KeSe2}).
In literature, \eqref{IntroEq0} is called the Keller-Segel model or a chemotaxis model.
The nature of the sensitivity function of the mobile species with respect to the chemical signal, $\chi(u)$, plays important role in the features of solutions of \eqref{IntroEq0}. In the case of positive sensitivity function $\chi(u)$, the mobile species moves toward the higher concentration of the chemical substances and \eqref{IntroEq0} is referred to as chemoattraction models. If $\chi(u)$ has negative sign, the mobile species moves away the higher concentration of the chemical substances and \eqref{IntroEq0} is referred to as chemorepulsion models.

 It is well known that chemotactic cross-diffusion is somewhat "dangerous" in the sense that finite-time blow-up might occur.
 For example, for the choice of $\chi(u)=\chi u, \chi>0,$ and no logistic function, i.e $a=b=0$, finite time blow-up may occur in \eqref{IntroEq0} (see \cite{BBTW, DiNaRa, NAGAI_SENBA_YOSHIDA} for the case $\tau=0$ and \cite{HerVel,win_arxiv} for the case $\tau=1$), but that this situation is less dangerous when this taxis is repulsive (see \cite{CiLaCr} and the references therein). When the logistic source is not identically zero, that is $a>0$ and $b>0$, the blow-up phenomena in the chemoattraction case may be suppressed to some extent,  namely, finite time blow up does not occur
  in \eqref{IntroEq0} with $\chi(u)=\chi u$  provided that the logistic damping constant $b$ is sufficiently large relative to $\chi$ (see \cite{TeWi} for $\tau=0$ and \cite{Win} for $\tau=1$). Quite rich dynamical features may be observed in such systems,   either numerically  or also analytically.
  But many fundamental dynamical issues for \eqref{IntroEq0} are not well understood yet, in particular, when $\Omega$ is unbounded.

Since the works by Keller and Segel,  a rich variety of mathematical models for studying chemotaxis have appeared (see \cite{BBTW}, \cite{DiNa}, \cite{DiNaRa}, \cite{GaSaTe}, \cite{KKAS},  \cite{NAGAI_SENBA_YOSHIDA},\cite{SaSh1}, \cite{Sug},  \cite{SuKu},  \cite{TeWi},   \cite{WaMuZh},  \cite{win_jde}, \cite{win_JMAA_veryweak}, \cite{win_arxiv}, \cite{Win}, \cite{win_JNLS}, \cite{YoYo}, \cite{ZhMuHuTi}, and the references therein). In the current paper,  we consider chemoattraction-repulsion process in which cells undergo random motion and chemotaxis towards attractant and away from repellent \cite{MLACLE}. More precisely, we consider the model with proliferation and death of cells and assume that chemicals diffuse very quickly. These lead to the model of partial differential equations as follows:
\begin{equation}\label{Main-eq0}
\begin{cases}
u_{t}=\Delta u -\chi_1\nabla( u\nabla v_1)+\chi_2 \nabla(u\nabla v_2 )+ u(a -b u) \qquad \  x\in\Omega,\ t>0,  \\
0=(\Delta- \lambda_1 I)v_1+ \mu_1 u  \qquad \ x\in\Omega,\ t>0,  \\
0=(\Delta- \lambda_2 I)v_2+ \mu_2 u  \qquad \ x\in\Omega,\ t>0
\end{cases}
\end{equation}
complemented with certain boundary condition on $\partial\Omega$ if $\Omega$ is bounded. It is clear that that if either $\chi_1=0$ or $\chi_2=0$ then \eqref{Main-eq0} becomes  \eqref{IntroEq0} with $\chi(u)=\chi u$.   As \eqref{IntroEq0}, among the central problems about \eqref{Main-eq0} are global existence of classical/weak solutions with given initial functions; finite-time blow-up; pattern formation; existence, uniqueness, and stability of certain special solutions; spatial spreading and front propagation dynamics when the domain is a whole space; etc.
 Similarly, many of these central problems are not well understood yet, in particular, when $\Omega$ is unbounded.

Global existence and asymptotic behavior of solutions of \eqref{Main-eq0} on bounded domain $\Omega$ complemented with Neumann boundary conditions,
\begin{equation}\label{Main Intro-eq1}
\frac{\partial u}{\partial n}=\frac{\partial v_1}{\partial n}=\frac{\partial v_2}{\partial n}=0,
\end{equation}
have been studied in many papers (see \cite{EEspejoand-TSuzuki,  Horstman, Jin,Lin_Mu_Gao, JLZAW,  PLJSZW, MLACLE, TaWa, YWang1, WangXiang, QZhanYLi, PCHu1} and the references therein). For example, in \cite{QZhanYLi}, among others, the authors proved that, if $b>\chi_{1}\mu_1-\chi_2\mu_2$, or $N\leq 2$, or $\frac{N-2}{N}(\chi_1\mu_1-\chi_2\mu_2)<b$ and $N\geq 3$,
then for every nonnegative initial $u_0\in C^{0}(\overline{\Omega}),$ \eqref{Main-eq0}+\eqref{Main Intro-eq1} has a unique global classical solution $(u(\cdot,\cdot),v_1(\cdot,\cdot),v_2(\cdot,\cdot))$ which is uniformly bounded. In the case that there is no logistic function, the authors in \cite{TaWa} studied  both \eqref{Main-eq0} and its full parabolic variant.

 In a very recent work \cite{SaSh4}, the authors of the current paper studied the global existence, asymptotic behavior, and spatial spreading properties of classical solutions of \eqref{Main-eq0}  on the whole space $\Omega=\R^N$,  that is,
\begin{equation}\label{Main-eq01}
\begin{cases}
u_{t}=\Delta u -\chi_1\nabla( u\nabla v_1)+\chi_2 \nabla(u\nabla v_2 )+ u(a -b u) \qquad \  x\in\R^N,\ t>0,  \\
0=(\Delta- \lambda_1 I)v_1+ \mu_1 u  \qquad \ x\in\R^N,\ t>0,  \\
0=(\Delta- \lambda_2 I)v_2+ \mu_2 u  \qquad \ \ x\in\R^N,\ t>0.
\end{cases}
\end{equation}

Let
\begin{equation}
\label{unif-cont-space}
C_{\rm unif}^b(\R^N)=\{u\in C(\R^N)\,|\, u(x)\,\,\text{is uniformly continuous in}\,\, x\in\R^N\,\, {\rm and}\,\, \sup_{x\in\R^N}|u(x)|<\infty\}
\end{equation}
equipped with the norm $\|u\|_\infty=\sup_{x\in\R}|u(x)|$. For every real number $r$, we let $(r)_+=\max\{0,r\}$ and $(r)_-=\max\{0,-r\}$. 
Let
\begin{equation}\label{const-upper-bound}
M:=\min\Big\{ \frac{(\chi_2\mu_2\lambda_2-\chi_1\mu_1\lambda_1)_++\chi_1\mu_1(\lambda_1-\lambda_2)_+}{\lambda_2} , \frac{\chi_2\mu_2(\lambda_1-\lambda_2)_+ + (\chi_2\mu_2\lambda_2-\chi_1\mu_1\lambda_1)_+}{\lambda_1}  \Big\}
\end{equation}
and
\begin{equation}\label{stability-cond}
K:=\min\Big\{\frac{|\chi_2\lambda_2\mu_2-\chi_1\lambda_1\mu_1|+\chi_1\mu_1|\lambda_1-\lambda_2|}{\lambda_2},\frac{ \chi_2\mu_2|\lambda_1-\lambda_2| + |\chi_2\mu_2\lambda_2-\chi_1\mu_1\lambda_1|}{\lambda_{1}}   \Big\}.
\end{equation}
Among others, the following are proved in \cite{SaSh4}.

\smallskip

\noindent$\bullet$ {\it If $b+\chi_2\mu_2>\chi_1\mu_1+M$, then  for every nonnegative initial function $u_0\in C^{b}_{\rm unif}(\R^{N})$, \eqref{Main-eq01} has a unique bounded and globally defined classical solution $(u(\cdot,\cdot;u_0),(v_{1}(\cdot,\cdot;u_0)),v_{2}(\cdot,\cdot;u_0))$
 with $u(\cdot,0;u_0)=u_0$ and  $\|u(\cdot,t;u_0)\|_{\infty}\leq \max\{\|u_0\|_{\infty}\ ,\ \frac{a}{b+\chi_2\mu_2-\chi_1\mu_1-M}\}$, for all $ t\geq 0. $
}

\smallskip

\noindent$\bullet$ {\it  If $b+\chi_2\mu_2>\chi_1\mu_1+K$, then  for every initial function $u_0\in C^{b}_{\rm unif}(\R^{N})$ with $\inf_{x\in\R^N}u_{0}(x)>0$, the unique bounded and globally defined classical solution $(u(\cdot,\cdot;u_0),(v_{1}(\cdot,\cdot;u_0)),v_{2}(\cdot,\cdot;u_0))$ \eqref{Main-eq01} with $u(\cdot,0;u_0)=u_0$  satisfies $$\lim_{t\to \infty}\Big[\|u(\cdot,t;u_0)-\frac{a}{b}\|_{\infty}+\|v_1(\cdot,t;u_0)-\frac{a\mu_1}{b\lambda_1}\|_{\infty}+ \|v_2(\cdot,t;u_0)-\frac{a\mu_2}{b\lambda_2}\|_{\infty}\Big]=0.$$ That is the constant steady state $(\frac{a}{b},\frac{a\mu_1}{b\lambda_1},\frac{a\mu_2}{b\lambda_2})$ is asymptotically stable with respect to strictly  positive perturbations if $b+\chi_2\mu_2>\chi_1\mu_1+K$.
}

\smallskip

It  is not yet known whether it is enough for $b+\chi_2\mu_2>\chi_1\mu_1+M$ to guarantee the stability of the positive constant steady solution  with respect to strictly positive perturbations.

Note that, in the absence of the chemotaxis (i.e. $\chi_1=\chi_2=0$ or $\chi_1-\chi_2= \mu_1-\mu_2=\lambda_1-\lambda_2=0$), the first equation of \eqref{Main-eq0} with $\Omega=\R^N$ becomes
\begin{equation}
\label{kpp-eq}
u_{t}=\Delta u+ u(a-bu),\quad  x\in\R^N,
\end{equation}
which is referred to as Fisher or KPP equation  due to  the pioneering works by Fisher (\cite{Fis}) and Kolmogorov, Petrowsky, Piscunov
(\cite{KPP}).  Among important solutions of \eqref{kpp-eq}
are traveling wave solutions of \eqref{kpp-eq} connecting the constant solutions $a/b$ and $0$.
It is well known that \eqref{kpp-eq} has traveling wave solutions $u(t,x)=\phi(x-ct)$ connecting $\frac{a}{b}$ and $0$ (i.e.
$(\phi(-\infty)=a/b$, $\phi(\infty)=0)$) for all speeds $c\geq 2\sqrt{a}$ and has no such traveling wave
solutions of slower speed (see \cite{Fis, KPP, Wei1}). Moreover, the stability of traveling wave solutions of \eqref{kpp-eq} connecting $\frac{a}{b}$ and $0$ has also been studied (see \cite{Bra}, \cite{Sat}, \cite{Uch}, etc.).
The above mentioned results for \eqref{kpp-eq} have also been
well extended to   reaction diffusion equations of the form,
\begin{equation}
\label{general-kpp-eq}
u_t=\Delta u+u f(t,x,u),\quad x\in\R^N,
\end{equation}
where $f(t,x,u)<0$ for $u\gg 1$,  $\partial_u f(t,x,u)<0$ for $u\ge 0$ (see \cite{Berestycki1, BeHaNa1, BeHaNa2, Henri1, Fre, FrGa, LiZh, LiZh1, Nad, NoRuXi, NoXi1, She1, She2, Wei1, Wei2, Zla}, etc.).

Similar to \eqref{kpp-eq}, traveling wave solutions  connecting the constant solutions $(\frac{a}{b},\frac{a\mu_1}{b\lambda_1},\frac{a\mu_2}{b\lambda_2})$ and $(0,0,0)$ are among most important solutions of \eqref{Main-eq01}. However, such solutions have been hardly studied. The objective of the current paper is to study the existence of traveling wave solutions connecting $(\frac{a}{b},\frac{a\mu_1}{b\lambda_1},\frac{a\mu_2}{b\lambda_2})$ and $(0,0,0)$.
A nonnegative solution $(u(x,t),v_1(x,t),v_2(x,t))$ of \eqref{Main-eq01} is called a {\it traveling wave solution} connecting $(\frac{a}{b},\frac{a\mu_1}{b\lambda_1},\frac{a\mu_2}{b\lambda_2})$ and $(0,0,0)$ and propagating in the direction $\xi\in S^{N-1}$ with speed $c$ if it is of the form
$(u(x,t),v_1(x,t),v_2(x,t))=(U(x\cdot\xi-ct),V_1(x\cdot\xi-ct),V_2(x\cdot\xi-ct))$ with
$\lim_{z\to -\infty}(U(z),V_1(z),V_2(z))=(\frac{a}{b},\frac{a\mu_1}{b\lambda_1},\frac{a\mu_2}{b\lambda_2})$ and $\lim_{z\to\infty}(U(z),V_1(z),V_2)=(0,0,0)$.

Observe that, if $(u(x,t),v_1(x,t),v_2(x,t))=(U(x\cdot\xi-ct),V_1(x\cdot\xi-ct),V_2(x\cdot\xi-ct))$  $(x\in\R^N, \ t\ge 0)$ is a traveling wave solution of \eqref{Main-eq01} connecting  $(\frac{a}{b},\frac{a\mu_1}{b\lambda_1},\frac{a\mu_2}{b\lambda_2})$ and $(0,0,0)$ and propagating
in the direction $\xi\in S^{N-1}$, then $(u,v_1,v_2)=(U(x-ct),V_1(x-ct),V_2(x-ct))$ ($x\in\R$)
is a traveling wave solution of
\begin{equation}
\label{Main-eq1}
\begin{cases}
\partial_tu=\partial_{xx}u +\partial_x(u \partial_x(\chi_2v_2-\chi_1v_1)) + u(a-bu),\quad x\in\R\\
0=\partial_{xx}v_{1}-\lambda_1v_1+ \mu_1u, \quad x\in\R\\
0=\partial_{xx}v_{2}-\lambda_2v_2+ \mu_2u, \quad x\in\R
\end{cases}
\end{equation}
connecting  $(\frac{a}{b},\frac{a\mu_1}{b\lambda_1},\frac{a\mu_2}{b\lambda_2})$ and $(0,0,0)$. Conversely, if $(u(x,t),v_1(x,t),v_2(x,t))=(U(x-ct),V_1(x-ct),V_2(x,t))$ ($x\in\R,\ t\geq 0$) is a traveling wave solution
of \eqref{Main-eq1} connecting $(\frac{a}{b},\frac{a\mu_1}{b\lambda_1},\frac{a\mu_2}{b\lambda_2})$ and $(0,0,0)$, then $(u,v_1,v_2)=(U(x\cdot\xi-ct),V_1(x\cdot\xi-ct),V_{2}(x\cdot\xi-ct))$  $(x\in\R^N)$ is a traveling wave solution of \eqref{Main-eq01} connecting  $(\frac{a}{b},\frac{a\mu_1}{b\lambda_1},\frac{a\mu_2}{b\lambda_2})$ and $(0,0,0)$ and propagating in the direction $\xi\in S^{N-1}$. In the following, we will then study the existence of traveling wave solutions
of \eqref{Main-eq1} connecting $(\frac{a}{b},\frac{a\mu_1}{b\lambda_1},\frac{a\mu_2}{b\lambda_2})$ and $(0,0,0)$.

Observe also that $(u,v_1,v_2)=(U(x-ct),V_1(x-ct),V_2(x-ct))$  is a traveling wave solution of \eqref{Main-eq1} connecting $(\frac{a}{b},\frac{a\mu_1}{b\lambda_1},\frac{a\mu_2}{b\lambda_2})$ and $(0,0,0)$ with speed $c$ if and only if $(u,v_1,v_2)=(U(x),V_1(x),V_2(x))$ is a stationary solution of
the following parabolic-elliptic-elliptic chemotaxis system,
\begin{equation}
\label{Main-eq2}
\begin{cases}
\partial_tu=\partial_{xx}u +c\partial_{x}u +\partial_x(u \partial_x(\chi_2v_2-\chi_1v_1)) + u(a-bu),\quad x\in\R\\
0=\partial_{xx}v_{1}-\lambda_1v_1+\mu_1u, \quad x\in\R\\
0=\partial_{xx}v_{2}-\lambda_2v_2+\mu_2u, \quad x\in\R
\end{cases}
\end{equation}
 connecting  $(\frac{a}{b},\frac{a\mu_1}{b\lambda_1},\frac{a\mu_2}{b\lambda_2})$ and $(0,0,0)$.
In this paper, to study the existence of traveling
wave solutions of \eqref{Main-eq1}, we study the existence of constant $c$'s so that \eqref{Main-eq2} has a stationary solution $(U(x),V_1(x),V_2(x))$ satisfying
$(U(-\infty),V_1(-\infty),V_2(-\infty))=(\frac{a}{b},\frac{a\mu_1}{b\lambda_1},\frac{a\mu_2}{b\lambda_2})$ and $(U(\infty),V_1(\infty)$, $V_2(\infty))=(0,0,0)$.

Throughout this paper we shall always suppose that
\begin{equation}\label{H1}
b+\chi_2\mu_2>\chi_1\mu_1+M,
\end{equation}
where $M$ is as in \eqref{const-upper-bound}.
We prove the following theorems on the existence and nonexistence of traveling wave solutions of \eqref{Main-eq2}.

\medskip

\noindent{\bf Theorem A.}
{\it  If $\ b+\chi_2\mu_2>\chi_1\mu_1+K$, then there exist a positive number $c^{*}(\chi_1,\mu_1,\lambda_1,\chi_2,\mu_2,\lambda_2)>0$ such that for every $ c\in  ( c^{*}(\chi_1,\mu_1,\lambda_1,\chi_2,\mu_2,\lambda_2)\ ,\ \infty)$, \eqref{Main-eq1} has a traveling wave solution $(u(x,t),v_1(x,t),v_2(x,t))=(U(x-ct),V_1(x-ct),V_2(x-ct))$  connecting the constant solutions $(\frac{a}{b},\frac{a\mu_1}{b\lambda_1},\frac{a\mu_2}{b\lambda_2})$ and $(0,0,0)$ with speed $c$. Moreover,
\begin{equation}
\lim_{(\chi_{1},\chi_2)\to (0^+,0^+)}c^{*}(\chi_1,\mu_1,\lambda_1,\chi_2,\mu_2,\lambda_2)=\begin{cases}
2\sqrt{a}\quad \text{if}\quad  a\leq \min\{\lambda_1, \lambda_2\}\\
\frac{a+\lambda_1}{\sqrt{\lambda_1}}\quad \text{if}\quad  \lambda_1\leq \min\{a, \lambda_2\}\\
\frac{a+\lambda_2}{\sqrt{\lambda_2}}\quad \text{if}\quad  \lambda_2\leq \min\{a, \lambda_1\}
\end{cases},\quad \forall \lambda_1,\lambda_2,\mu_1,\mu_2>0,
\end{equation}
and
\begin{equation}
\label{tail-eq}
\lim_{x\to \infty}\frac{U(x)}{e^{-\sqrt a \mu x}}=1,
\end{equation}
where $\mu$ is the only solution of the equation $\sqrt{a}(\mu+\frac{1}{\mu})=c$ in the interval $(0\ ,\ \min\{1,\sqrt{\frac{\lambda_1}{a}},\sqrt{\frac{\lambda_2}{a}}\})$.
}
\medskip

We will prove Theorem A by first constructing proper sub-solutions and sup-solutions of a collection of parabolic equations, a non-empty bounded and convex subset $\mathcal{E}_{\mu}$ of $C_{\rm unif}^b(\R)$, and a mapping from $\mathcal{E}_{\mu}$, and then showing that the mapping has a fixed point,
which gives rise to a traveling wave solution of \eqref{Main-eq1} satisfying \eqref{tail-eq}.

\medskip

\noindent {\bf Remark B.}
{\it Suppose that $\lambda_1=\lambda_2=\lambda>0$. Then,
\begin{description}
\item[(1)] For every $a>0$, $b>0$, $\mu_1>0$, $\mu_2>0$, and $\chi_1\geq0$, $\chi_2\geq0$, if $\chi_{1}\mu_1=\chi_2\mu_2$, then $c^*(\chi_1,\mu_1,\lambda,\chi_2,\mu_2,\lambda)=\begin{cases}
2\sqrt{a}\quad \text{if} \quad a\leq \lambda\\
\frac{a+\lambda}{\sqrt{\lambda}}\quad \text{if} \quad a\geq \lambda.
\end{cases}$
\item[(2)] For every $a>0$, $b>0$, $\mu_1>0$, $\mu_2>0$, and $\chi_1\geq0$, $\chi_2\geq0$, if $0<\chi_{1}\mu_1-\chi_2\mu_2<\frac{b}{2}$, then   $\lim_{\chi_1\mu_1-\chi_2\mu_2\to 0^{+}}c^*(\chi_1,\mu_1,\lambda,\chi_2,\mu_2,\lambda)=\begin{cases}
2\sqrt{a}\quad \text{if} \quad a\leq \lambda\\
\frac{a+\lambda}{\sqrt{\lambda}}\quad \text{if} \quad a\geq \lambda.
\end{cases}$
\item[(3)] For every $a>0$, $b>0$, $\mu_1>0$, $\mu_2>0$, and $\chi_1\geq0$, $\chi_2\geq0$, if $\chi_{1}\mu_1<\chi_2\mu_2$, then   $\lim_{\chi_2\mu_2-\chi_1\mu_1\to 0^{+}}c^*(\chi_1,\mu_1,\lambda,\chi_2,\mu_2,\lambda)=\begin{cases}
2\sqrt{a}\quad \text{if} \quad a\leq \lambda\\
\frac{a+\lambda}{\sqrt{\lambda}}\quad \text{if} \quad a\geq \lambda.
\end{cases}$
\end{description}
}

\medskip

\noindent {\bf Theorem C.} {\it
For any given $\chi_{i}\ge 0$, $\lambda_i>0$  and $\mu_i>0$ $(i=1,2)$, \eqref{Main-eq1} has no traveling wave solution $(u,v_1,v_2)=(U(x-ct),V_1(x-ct),V_2(x-ct))$
with $(U(-\infty),V_1(-\infty),V_2(-\infty))=(\frac{a}{b},\frac{a\mu_1}{b\lambda_1},\frac{a\mu_2}{b\lambda_2})$, $(U(\infty),V_1(\infty),V_2(\infty))=(0,0,0)$, and $c<2 \sqrt a$.
}

\medskip 

\smallskip

\noindent{\bf Remark D.}{\it
\begin{description}
\item[(i)] It follows from Theorem A that if either $(\chi_{1},\chi_2)\to (0^+,0^+)$ or $(\chi_{1}-\chi_2,\mu_1-\mu_2,\lambda_1-\lambda_2)\to (0,0,0)$, then $c^{*}(\chi_1,\mu_1,\lambda_1,\chi_2,\mu_2,\lambda_2)$ converges to the minimal speed of \eqref{kpp-eq}.

\item[(ii)] When $\chi_2=0$ in Theorem A, we recover Theorem A in \cite{SaSh2}

 \item[(iii)]  For given  $\chi_{i}\ge 0$, $\lambda_{i}$, $\mu_{i}>0$ $(i=1,2)$,  let $c_{\rm min}^{*}(\chi_1,\mu_1,\lambda_1$, $\chi_2,\mu_2,\lambda_2)$ be such that
 for any  $c\geq c_{\rm min}^{*}(\chi_1,\mu_1,\lambda_1,\chi_2,\mu_2,\lambda_2)$, \eqref{Main-eq1} has a traveling wave solution connecting $(\frac{a}{b},\frac{a\mu_1}{b\lambda_1},\frac{a\mu_2}{b\lambda_2})$ and $(0,0,0)$ with speed $c$ and \eqref{Main-eq1} has no traveling solution for speed less than $c_{\rm min}^{*}(\chi_1,\mu_1,\lambda_1,\chi_2,\mu_2,\lambda_2)$.
 By Theorems A and B, we have $$2\sqrt{a}\leq c_{\rm min}^{*}(\chi_1,\mu_1,\lambda_1,\chi_2,\mu_2,\lambda_2). $$
 It remains  open whether $c_{\rm min}^{*}(\chi_1,\mu_1,\lambda_1,\chi_2,\mu_2,\lambda_2)=2\sqrt a$. This question is about whether the chemotaxis increases the minimal wave speed of the existence of traveling wave
 solutions. It is of great theoretical and biological interests to investigate this question.
\end{description}
}



It should be  pointed out that there are many studies on traveling wave solutions of several other types of chemotaxis models, see, for example, \cite{AiHuWa, AiWa, FuMiTs, HoSt, LiLiWa,
MaNoSh, NaPeRy,Wan, SaSh2, SaSh3}, etc. In particular, the reader is referred to
the review paper \cite{Wan}.

The rest of this paper is organized as follows. Section 2  is to establish  the tools that will be needed in the proof of Theorem A. It is here that we define the two special functions, which are sub-solution and sup-solution of a collection of parabolic equations,  and a non-empty bounded and convex subset $\mathcal{E}_{\mu}$ of $C_{\rm unif}^b(\R)$. In section 3, we study  the existence and nonexistence of traveling wave solutions and prove Theorems A and B.

\section{Super- and sub-solutions}

In this section, we will construct super- and sub-solutions of some related equations of \eqref{Main-eq2}   and a non-empty bounded and convex subset $\mathcal{E}_{\mu}$ of $C_{\rm unif}^b(\R)$, which will be used to prove the existence of traveling wave solutions of \eqref{Main-eq1} in next section. Throughout this section we suppose that $a>0$  and $b>0$ are given positive real numbers.


For given $0<\nu<1$, let
\begin{equation}
\label{holder-cont-space}
C^{\nu}_{\rm unif}(\R^N)=\{u\in C_{\rm unif}^b(\R^N)\,|\, \sup_{x,y\in\R^N,x\not =y}\frac{|u(x)-u(y)|}{|x-y|^\nu}<\infty\}
\end{equation}
equipped with the norm $\|u\|_{C^\nu_{\rm unif}}=\sup_{x\in\R^N}|u(x)|+\sup_{x,y\in\R^N,x\not =y}\frac{|u(x)-u(y)|}{|x-y|^\nu}$.

For $0<\mu <\min\{1,\sqrt{\frac{\lambda_1}{a}}, \sqrt{\frac{\lambda_1}{a}} \}$, define
$$c_{\mu}=\sqrt{a}(\mu+\frac{1}{\mu})\quad {\rm and}\quad \varphi_{\mu}(x)=e^{-\sqrt{a}\mu x}\quad \forall\,\, x\in\R.$$
Note that for every fixed
$0<\mu  < \min\{1, \sqrt{\frac{\lambda_1}{a}}, \sqrt{\frac{\lambda_2}{a}}\}$, we have that $1- \mu^2 >0$; $a\mu^2-\lambda_i>0$ $(i=1,2)$;  the function $\varphi_{\mu}$ is decreasing, infinitely many differentiable, and satisfies
 \begin{equation}\label{Eq1 of varphi}
\varphi_{\mu}''(x)+c_{\mu}\varphi_{\mu}'(x)+a\varphi_{\mu}(x)=0 \quad\forall\ x\in\R
\end{equation}
and
\begin{equation}\label{Eq2 of varphi}
\frac{\mu_i}{a\mu^2-\lambda_i}\varphi_{\mu}''(x)-\frac{\mu_i}{a\mu^2-\lambda_i}(\lambda_i\varphi _{\mu})(x)=\mu_i\varphi_{\mu}(x)\quad (i=1,2).
\end{equation}

For every $0<\mu<\min\{1, \sqrt{\frac{\lambda_1}{a}}, \sqrt{\frac{\lambda_2}{a}}\}$,    define
\begin{align}\label{u-upper}
U_{\mu}^{+}(x)&=\min\{\frac{a}{b+\chi_2\mu_2-\chi_1\mu_1-M}, \varphi_{\mu}(x)\}\nonumber\\
&=\begin{cases}
\frac{a}{b+\chi_2\mu_2-\chi_1\mu_1-M} \ \quad \text{if }\ x\leq \frac{-\ln(\frac{a}{b+\chi_2\mu_2-\chi_1\mu_1-M})}{\mu}\\
e^{- \sqrt a \mu x} \qquad\qquad\qquad \text{if}\ x\geq \frac{-\ln(\frac{a}{b+\chi_2\mu_2-\chi_1\mu_1-M})}{\mu}.
\end{cases}
\end{align}
Since $\varphi_{\mu}$ is decreasing, then the function $U^{+}_{\mu}$ is non-increasing. Furthermore, the function $U^{+}_{\mu}$ belongs to $ C^{\nu}_{\rm unif}(\R)$ for every $0\leq \nu< 1$,  $0<\mu<\min\{1, \sqrt{\frac{\lambda_1}{a}}, \sqrt{\frac{\lambda_2}{a}}\}$.

Let  $0<\mu<\min\{1, \sqrt{\frac{\lambda_1}{a}}, \sqrt{\frac{\lambda_2}{a}}\}$ be fixed. Next, let $\mu<\tilde{\mu}<\min\{1, 2\mu,\sqrt{\frac{\lambda_1}{a}}, \sqrt{\frac{\lambda_2}{a}}\}$ and $d>1$. The function $\varphi_{\mu}-d\varphi_{\tilde{\mu}}$ achieved its maximum value at $\bar{a}_{\mu,\tilde{\mu},d}:=\frac{\ln(d\tilde{\mu})-\ln(\mu)}{(\tilde{\mu}-\mu)\sqrt{a}}$ and takes the value zero at $\underline{a}_{\mu,\tilde{\mu},d}:= \frac{\ln(d)}{(\tilde{\mu}-\mu)\sqrt{a}}$.
Define
\begin{equation}\label{u-lower}
U_{\mu}^{-}(x):= \max\{ 0, \varphi_{\mu}(x)-d\varphi_{\tilde{\mu}}(x)\}=\begin{cases}
0\qquad \qquad \qquad \quad \text{if}\ \ x\leq \underline{a}_{\mu,\tilde{\mu},d}\\
\varphi_{\mu}(x)-d\varphi_{\tilde{\mu}}(x)\quad \text{if}\ x\geq \underline{a}_{\mu,\tilde{\mu},d}.
\end{cases}
\end{equation}
It is clear that
$$0\leq U_{\mu}^{-}(x)\leq U^{+}_{\mu}(x)\leq \frac{a}{b+\chi_2\mu_2-\chi_1\mu_1-M}$$
for every $x\in\R$, and $U_{\mu}^{-}\in  C^{\nu}_{\rm unif}(\R)$ for every $0\leq \nu< 1$. Finally, let  the set $\mathcal{E}_{\mu}$ be defined by
\begin{equation}\label{definition-E-mu}
\mathcal{E}_{\mu}=\{u\in C^{b}_{\rm unif}(\R) \,|\, U_{\mu}^{-}\leq u\leq U_{\mu}^{+}\}.
\end{equation}
It should be noted that $U_{\mu}^{-}$ and $\mathcal{E}_{\mu}$ all depend on $\tilde{\mu}$ and $d$. Later on, we shall provide more information on how to choose $d$ and $\tilde{\mu}$ whenever $\mu$ is given.

For clarity, we introduce the following quantities, which will also be useful in the statement of our main results in this section as well for the subsequent sections,
\begin{equation}\label{D}\tilde K=\min\Big\{\frac{|\chi_1\mu_1-\chi_2\mu_2|}{\sqrt{\lambda_2}}+\frac{\chi_1\mu_1|\sqrt{\lambda_1}-\sqrt{\lambda_2}|}{\sqrt{\lambda_1\lambda_2}}\ ,\ \frac{|\chi_1\mu_1-\chi_2\mu_2|}{\sqrt{\lambda_1}}+\frac{\chi_2\mu_2|\sqrt{\lambda_1}-\sqrt{\lambda_2}|}{\sqrt{\lambda_1\lambda_2}} \Big\},
\end{equation}
\begin{align}\label{const-lower-bound}
\tilde M=\min\Big\{ \frac{(\chi_2\lambda_2\mu_2-\chi_1\lambda_1\mu_1)_{-}+\chi_1\mu_1(\lambda_1-\lambda_2)_{-}}{\lambda_2}, \frac{\chi_2\mu_2(\lambda_1-\lambda_2)_{-} + (\chi_2\mu_2\lambda_2-\chi_1\mu_1\lambda_1)_{-}}{\lambda_{1}} \Big\},
\end{align}
\begin{align}\label{L-upper}
\overline{L}_{\mu}=\min\Big\{&\frac{\chi_1\mu_1\lambda_1(\lambda_1-\lambda_2)_+}{(\lambda_2-a\mu^2)(\lambda_1-a\mu^2)}
+\frac{(\chi_2\mu_2\lambda_2-\chi_1\mu_1\lambda_1)_{+}}{\lambda_2-a\mu^2}, \nonumber\\
&\frac{\chi_2\mu_2\lambda_2(\lambda_1-\lambda_2)_+}{(\lambda_2-a\mu^2)(\lambda_1-a\mu^2)}
+\frac{(\chi_2\mu_2\lambda_2-\chi_1\mu_1\lambda_1)_{+}}{\lambda_1-a\mu^2}\Big\},
\end{align}
\begin{align}\label{L-lower}
\underline{L}_{\mu}=\min\Big\{& \frac{\chi_1\mu_1\lambda_1(\lambda_1-\lambda_2)_{-}}{(\lambda_2-a\mu^2)(\lambda_1-a\mu^2)} + \frac{(\chi_2\mu_2\lambda_2-\chi_1\mu_1\lambda_1)_{-}}{\lambda_2-a\mu^2},\nonumber\\
& \frac{\chi_2\mu_2\lambda_2(\lambda_1-\lambda_2)_{-}}{(\lambda_2-a\mu^2)(\lambda_1-a\mu^2)} + \frac{(\chi_2\mu_2\lambda_2-\chi_1\mu_1\lambda_1)_{-}}{\lambda_1-a\mu^2}\Big\},
\end{align}
and
\begin{align}\label{K}
K_{\mu}=\min\Big\{&\frac{|\chi_2\mu_2-\chi_1\mu_1|(\sqrt{\lambda_2-a\mu^2} +\mu\sqrt{a})}{\lambda_2-a\mu^2} +\Big|\frac{\chi_1\mu_1}{\sqrt{\lambda_1-a\mu^2}}-\frac{\chi_1\mu_1}{\sqrt{\lambda_2-a\mu^2}}\Big|\nonumber\\
&+\frac{\mu\sqrt{a}\chi_1\mu_1|\lambda_1-\lambda_2|}{(\lambda_1-a\mu^2)(\lambda_2-a\mu^2)}\, ,\,  \frac{|\chi_2\mu_2-\chi_1\mu_1|(\sqrt{\lambda_1-a\mu^2} +\mu\sqrt{a})}{\lambda_1-a\mu^2} \nonumber\\
&+\Big|\frac{\chi_2\mu_2}{\sqrt{\lambda_2-a\mu^2}}-\frac{\chi_2\mu_2}{\sqrt{\lambda_1-a\mu^2}}\Big|
+\frac{\mu\sqrt{a}\chi_2\mu_2|\lambda_2-\lambda_1|}{(\lambda_2-a\mu^2)(\lambda_1-a\mu^2)}\Big\}.
\end{align}
 Observe that $M<\overline{L}_{\mu}$. Hence $b+\chi_2\mu_2-\chi_1\mu_1\geq \overline{L}_{\mu}$ implies that \eqref{H1} holds. Furthermore, we have
 \begin{equation}\label{mu-lim}
 K=M +\tilde M, \quad \lim_{\mu\to 0^{+}}\overline{L}_{\mu}=M, \quad \lim_{\mu\to 0^{+}}K_{\mu}=\tilde K, \quad \text{and}\quad \lim_{\mu\to 0^+}\mu K_{\mu}=0.
 \end{equation}

For every $u\in C_{\rm unif}^b(\R)$, consider
\begin{equation}\label{ODE2}
U_{t}=U_{xx}+(c_{\mu}+\partial_{x}(\chi_2 V_2-\chi_1 V_1)(\cdot;u))U_{x}+(a+(\chi_2\lambda_2V_2-\chi_1\lambda_1V_1)(\cdot;u)-(b+\chi_2\mu_2-\chi_1\mu_1)U)U,
\end{equation}
where
\begin{equation}\label{Inverse of u}
V_i(x;u)=\mu_i\int_{0}^{\infty}\int_{\R}\frac{e^{-\lambda_i s}}{\sqrt{4\pi s}}e^{-\frac{|x-z|^{2}}{4s}}u(z)dzds,\quad i=1,2.
\end{equation}
It is well known that the function $V_1(x;u)$ (resp. $V_2(x;u)$)
is the solution of the second equation (resp. the third equation) of \eqref{Main-eq1} in $C^{b}_{\rm unif}(\R)$ with given $u\in C_{\rm unif}^b(\R)$.

For  given open intervals $D\subset \R$ and $I\subset \R$, a function $U(\cdot,\cdot)\in C^{2,1}(D\times I,\R)$ is called a {\it super-solution} or {\it sub-solution} of \eqref{ODE2} on $D\times I$ if for every $x\in D$ and $t\in I$,
$$U_{t}\ge U_{xx}+(c_{\mu}+(\chi_2 V_2'-\chi_1 V_1')(x;u))U_{x}+(a+(\chi_2\lambda_2V_2-\chi_1\lambda_1V_1)(x;u)-(b+\chi_2\mu_2-\chi_1\mu_1)U)U,
$$
or for  every $x\in D$ and $t\in I$,
$$
U_{t}\le U_{xx}+(c_{\mu}+(\chi_2 V_2'-\chi_1 V_1')(x;u))U_{x}+(a+(\chi_2\lambda_2V_2-\chi_1\lambda_1V_1)(x;u)-(b+\chi_2\mu_2-\chi_1\mu_1)U)U.
$$

Next, we state the main result of this section.  For convenience, we introduce the following standing assumption.

\medskip
\noindent {\bf (H)}   $0<\mu<\min\{1,  \sqrt{\frac{\lambda_1}{a}}, \sqrt{\frac{\lambda_2}{a}}\}$, $M+\tilde M+\chi_1\mu_1<b+\chi_2\mu_2$, and
 \begin{equation}\label{sup-sub-solu-eq}
\mu\sqrt{a}K_{\mu}+\overline{L}_{\mu}\leq (b+\chi_2\mu_2-\chi_1\mu_1),
\end{equation}
where $M$, $\tilde M$, $K_{\mu}$ and $\overline{L}_{\mu}$ are given by \eqref{const-upper-bound}, \eqref{const-lower-bound}, \eqref{K} and \eqref{L-upper}, respectively.

\begin{tm}\label{super-sub-solu-thm}
 Assume (H).
Then  the following hold.
\begin{itemize}\item[(1)] For every $u\in \mathcal{E}_{\mu}$, we have that  $U(x,t)=\frac{a}{b+\chi_2\mu_2-\chi_1\mu_1-M}$ is  supper-solutions of \eqref{ODE2} on $\R\times\R$ where $M$ is given by \eqref{const-upper-bound}.

\item[(2)]For every $u\in \mathcal{E}_{\mu}$,  $U(x,t)=\varphi_{\mu}(x)$ is a supper-solutions of \eqref{ODE2} on $\R\times\R$.

\item[(3)] There is $d_0>1$ such  that for every $u\in \mathcal{E}_{\mu}$, we have that  $U(x,t)=U_{\mu}^-(x)$ is a sub-solution of \eqref{ODE2} on $(\underline{a}_{\mu,\tilde{\mu},d},\infty)\times \R$ for all $d\ge d_0$ and   $\mu< \tilde{\mu}<\min\{1, \mu, \sqrt{\frac{\lambda_1}{a}}, \sqrt{\frac{\lambda_2}{a}}\},$ and $K_{\mu}(\tilde{\mu}-\mu)\leq \overline{L}_{\mu}+\underline{L}_{\mu}$, where $\overline{L}_{\mu}, \underline{L}_{\mu}$ and $K_\mu$ are given by \eqref{L-upper}, \eqref{L-lower} and \eqref{K}.

\item[(4)] For every $u\in \mathcal{E}_{\mu}$, $U(x,t)=U_{\mu}^-(x_\delta)$ is a sub-solution of \eqref{ODE2} on $\R\times \R$ for $0<\delta\ll 1$, where $x_\delta=\underline{a}_{\mu,\tilde{\mu},d}+\delta$.\end{itemize}
\end{tm}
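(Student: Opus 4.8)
\medskip
\noindent\textbf{Proof proposal.} The plan is to treat all four items by one mechanism: substitute the candidate profile into the reaction--diffusion--drift operator on the right of \eqref{ODE2}, use the explicit identities \eqref{Eq1 of varphi} and \eqref{Eq2 of varphi} for $\varphi_\mu$ (and the analogous one for $\varphi_{\tilde\mu}$) to cancel the principal part, and then dominate the leftover chemotaxis terms $(\chi_2V_2'-\chi_1V_1')(\cdot;u)$ and $(\chi_2\lambda_2V_2-\chi_1\lambda_1V_1)(\cdot;u)$ by a multiple of $\varphi_\mu$ or of a constant. The single ingredient that carries the argument is a family of \emph{combined} linear estimates. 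Writing $V_i(\cdot;u)=\mu_i\mathcal H_{\lambda_i}u$ with $\mathcal H_\lambda u(x)=\tfrac1{2\sqrt\lambda}\int_\R e^{-\sqrt\lambda|x-z|}u(z)\,dz=(\lambda-\partial_{xx})^{-1}u\ge0$ for $u\ge0$, I would first record $\mathcal H_\lambda\mathbf 1=1/\lambda$, $\mathcal H_\lambda\varphi_\mu=\varphi_\mu/(\lambda-a\mu^2)$ (legitimate because $a\mu^2<\lambda_i$ under (H)), the difference identity $(\lambda_2-\partial_{xx})(\mathcal H_{\lambda_2}u-\mathcal H_{\lambda_1}u)=(\lambda_1-\lambda_2)\mathcal H_{\lambda_1}u$, and the two decompositions
\[
\chi_2\lambda_2V_2-\chi_1\lambda_1V_1=\chi_1\mu_1\lambda_1(\mathcal H_{\lambda_2}u-\mathcal H_{\lambda_1}u)+(\chi_2\mu_2\lambda_2-\chi_1\mu_1\lambda_1)\mathcal H_{\lambda_2}u
\]
together with its mirror image (with $\lambda_1,\lambda_2$ interchanged). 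Since $\mathcal H_{\lambda_i}u\ge0$ and $\operatorname{sgn}(\mathcal H_{\lambda_2}u-\mathcal H_{\lambda_1}u)=\operatorname{sgn}(\lambda_1-\lambda_2)$, plugging in $0\le u\le C$ (a constant) yields $-\tilde M\,C\le(\chi_2\lambda_2V_2-\chi_1\lambda_1V_1)(\cdot;u)\le M\,C$, and plugging in $0\le u\le\varphi_\mu$ pointwise yields $-\underline L_\mu\varphi_\mu\le(\chi_2\lambda_2V_2-\chi_1\lambda_1V_1)(\cdot;u)\le\overline L_\mu\varphi_\mu$; running the same bookkeeping on the $x$-derivatives of the kernels gives $|(\chi_2V_2'-\chi_1V_1')(\cdot;u)|\le K_\mu\varphi_\mu$ for $0\le u\le\varphi_\mu$. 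These are exactly the constants $M,\tilde M,\overline L_\mu,\underline L_\mu,K_\mu$ of \eqref{const-upper-bound}, \eqref{const-lower-bound}, \eqref{L-upper}, \eqref{L-lower}, \eqref{K}.

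Items (1) and (2) then reduce to arithmetic. For (1) I would take $U\equiv U_\infty:=a/(b+\chi_2\mu_2-\chi_1\mu_1-M)$; since $u\in\mathcal E_\mu$ gives $0\le u\le U_\mu^+\le U_\infty$, the constant-input estimate bounds the operator applied to $U_\infty$ by $\bigl(a+MU_\infty-(b+\chi_2\mu_2-\chi_1\mu_1)U_\infty\bigr)U_\infty=0$, so $U_\infty$ is a super-solution. For (2) I would take $U=\varphi_\mu$; identity \eqref{Eq1 of varphi} kills $\varphi_\mu''+c_\mu\varphi_\mu'+a\varphi_\mu$, and after dividing by $\varphi_\mu>0$ the super-solution inequality becomes $-\sqrt a\,\mu\,(\chi_2V_2'-\chi_1V_1')(\cdot;u)+(\chi_2\lambda_2V_2-\chi_1\lambda_1V_1)(\cdot;u)\le(b+\chi_2\mu_2-\chi_1\mu_1)\varphi_\mu$; the pointwise estimates (using $u\le U_\mu^+\le\varphi_\mu$) bound the left side by $(\sqrt a\,\mu K_\mu+\overline L_\mu)\varphi_\mu$, and \eqref{sup-sub-solu-eq} in (H) says precisely $\sqrt a\,\mu K_\mu+\overline L_\mu\le b+\chi_2\mu_2-\chi_1\mu_1$.

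Item (3) is where the work is. Take $U=\varphi_\mu-d\varphi_{\tilde\mu}$, which agrees with $U_\mu^-$ on $(\underline a_{\mu,\tilde\mu,d},\infty)$. One computes $U''+c_\mu U'+aU=-d\,p(\tilde\mu)\varphi_{\tilde\mu}$ with $p(\tilde\mu)=a(\tilde\mu-\mu)(\tilde\mu-1/\mu)<0$ (as $\mu<\tilde\mu<1<1/\mu$), so $-d\,p(\tilde\mu)\varphi_{\tilde\mu}>0$ is a useful positive term; using $U'=-\sqrt a\,\mu\,U+\sqrt a(\tilde\mu-\mu)d\varphi_{\tilde\mu}$, the sub-solution inequality on $(\underline a_{\mu,\tilde\mu,d},\infty)\times\R$ is equivalent to
\[
d|p(\tilde\mu)|\varphi_{\tilde\mu}+(\chi_2V_2'-\chi_1V_1')(\cdot;u)\,U'+(\chi_2\lambda_2V_2-\chi_1\lambda_1V_1)(\cdot;u)\,U-(b+\chi_2\mu_2-\chi_1\mu_1)U^2\ge0 .
\]
On this set $0\le U\le\varphi_\mu$ and $0\le d\varphi_{\tilde\mu}\le\varphi_\mu$, so with $|(\chi_2V_2'-\chi_1V_1')(\cdot;u)|\le K_\mu\varphi_\mu$, $(\chi_2\lambda_2V_2-\chi_1\lambda_1V_1)(\cdot;u)\ge-\underline L_\mu\varphi_\mu$ and $U^2\le\varphi_\mu U$, the left side is at least $d|p(\tilde\mu)|\varphi_{\tilde\mu}-\sqrt a(\tilde\mu-\mu)dK_\mu\varphi_\mu\varphi_{\tilde\mu}-(\sqrt a\,\mu K_\mu+\underline L_\mu+b+\chi_2\mu_2-\chi_1\mu_1)\varphi_\mu U$. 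Dividing by $\varphi_{\tilde\mu}$ and using $\varphi_\mu\le\varphi_\mu(\underline a_{\mu,\tilde\mu,d})=d^{-\mu/(\tilde\mu-\mu)}$ together with $\varphi_\mu(x)U(x)/\varphi_{\tilde\mu}(x)\le\varphi_\mu(x)^2/\varphi_{\tilde\mu}(x)=e^{-\sqrt a(2\mu-\tilde\mu)x}\le d^{-(2\mu-\tilde\mu)/(\tilde\mu-\mu)}$ on $[\underline a_{\mu,\tilde\mu,d},\infty)$ --- here the hypothesis $\tilde\mu<2\mu$ enters, making $2\mu-\tilde\mu>0$ --- the required inequality reduces to $d|p(\tilde\mu)|$ dominating a fixed constant times a negative power of $d$, which holds for all $d\ge d_0$ with $d_0>1$ depending only on the parameters; the compatibility condition $K_\mu(\tilde\mu-\mu)\le\overline L_\mu+\underline L_\mu$ is what keeps the admissible choice of $\tilde\mu$ (and hence $d_0$) nonempty.

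For (4) I would take $U\equiv U_\mu^-(x_\delta)$, a nonnegative constant. Being $x,t$-independent it is a sub-solution iff $(b+\chi_2\mu_2-\chi_1\mu_1)U_\mu^-(x_\delta)\le a+(\chi_2\lambda_2V_2-\chi_1\lambda_1V_1)(\cdot;u)$ for all $u\in\mathcal E_\mu$. The constant-input estimate gives $a+(\chi_2\lambda_2V_2-\chi_1\lambda_1V_1)(\cdot;u)\ge a-\tilde M U_\infty=a\,\dfrac{b+\chi_2\mu_2-\chi_1\mu_1-(M+\tilde M)}{b+\chi_2\mu_2-\chi_1\mu_1-M}>0$, the positivity being exactly $M+\tilde M+\chi_1\mu_1<b+\chi_2\mu_2$ in (H) combined with $K=M+\tilde M$ from \eqref{mu-lim}; on the other hand $U_\mu^-(x_\delta)=\varphi_\mu(\underline a_{\mu,\tilde\mu,d})\bigl(e^{-\sqrt a\,\mu\,\delta}-e^{-\sqrt a\,\tilde\mu\,\delta}\bigr)\to0$ as $\delta\to0^+$, so for $0<\delta\ll1$ the inequality holds. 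The genuine obstacle throughout is the combined estimate of the first paragraph: separating the terms (e.g.\ bounding $\chi_2\lambda_2V_2$ and $\chi_1\lambda_1V_1$ individually) is far too lossy, since $\chi_2\mu_2$ alone need not be $\le M$, and one must use that $V_1,V_2$ are produced by the \emph{same} $u$, which is precisely what the difference identity for $\mathcal H_{\lambda_2}u-\mathcal H_{\lambda_1}u$ encodes; secondarily, near the interface $x=\underline a_{\mu,\tilde\mu,d}$ in (3), where $U$ and $U^2$ vanish but $U'$ does not, one must verify that $d|p(\tilde\mu)|$ beats $\sqrt a(\tilde\mu-\mu)K_\mu\varphi_\mu$ uniformly, which is exactly why $\tilde\mu$ is forced close to $\mu$ (and below $2\mu$) and $d$ large.
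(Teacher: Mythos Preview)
Your proposal is correct and follows the paper's strategy closely: the same kernel identities for $\mathcal H_\lambda$, the same two-way decomposition of $\chi_2\lambda_2V_2-\chi_1\lambda_1V_1$, and the same pointwise bounds $M,\tilde M,\overline L_\mu,\underline L_\mu,K_\mu$ drive all four items, and your treatments of (1), (2), (4) are essentially identical to the paper's.

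The one place where you diverge is the endgame of (3). The paper expands $(U_\mu^-)^2=\varphi_\mu^2-2d\varphi_\mu\varphi_{\tilde\mu}+d^2\varphi_{\tilde\mu}^2$, uses $d\varphi_{\tilde\mu}<\varphi_\mu$ on $(\underline a_{\mu,\tilde\mu,d},\infty)$ to absorb the $d^2\varphi_{\tilde\mu}^2$ term, and arrives at $dA_0\varphi_{\tilde\mu}-A_1\varphi_\mu^2+dA_2\varphi_\mu\varphi_{\tilde\mu}$ with $A_2=-\sqrt a K_\mu\tilde\mu+\underline L_\mu+b+\chi_2\mu_2-\chi_1\mu_1$; the compatibility condition $\sqrt a K_\mu(\tilde\mu-\mu)\le\overline L_\mu+\underline L_\mu$ is used precisely to force $A_2\ge0$ (via \eqref{sup-sub-solu-eq}), after which $d_0=\max\{1,A_1/A_0\}$ suffices because $x>0$ and $2\mu>\tilde\mu$ make $e^{\sqrt a(2\mu-\tilde\mu)x}>1$. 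Your route instead writes $U'=-\sqrt a\,\mu U+\sqrt a(\tilde\mu-\mu)d\varphi_{\tilde\mu}$, bounds $U^2\le\varphi_\mu U$, and after dividing by $\varphi_{\tilde\mu}$ reduces both negative terms to $d^{(\tilde\mu-2\mu)/(\tilde\mu-\mu)}$, a genuinely negative power of $d$ thanks to $\tilde\mu<2\mu$. This is a clean variant and in fact does \emph{not} use the compatibility condition at all; so your parenthetical that ``$K_\mu(\tilde\mu-\mu)\le\overline L_\mu+\underline L_\mu$ is what keeps the admissible choice of $\tilde\mu$ nonempty'' misidentifies its role --- in the paper it is what makes $A_2\ge0$, while in your argument it is simply an unused hypothesis. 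The paper's grouping buys an explicit, simple threshold $d_0=\max\{1,A_1/A_0\}$; your grouping buys a proof that is insensitive to that extra restriction on $\tilde\mu$.
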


To prove Theorem \ref{super-sub-solu-thm}, we first establish some estimates on $(\chi_2\lambda_2V_2-\chi_1\lambda_1V_{1})(\cdot;u)$ and $\partial_{x}(\chi_2V_{2}-\chi_1V_1)(\cdot;u)$ for every $u\in\mathcal{E}_{\mu}$.

It follows from \eqref{Inverse of u}, that \begin{equation}\label{Estimates on Inverse of u} \max\{\|V_{i}(\cdot;u)\|_{\infty}, \ \|\partial_{x}V_{i}(\cdot;u)\|_{\infty} \}\leq \frac{\mu_i}{\lambda_{i}}\|u\|_{\infty}\quad \forall\ u\in C^{b}_{\rm unif}(\R), \ i\in\{1,2\}.
\end{equation}
Furthermore, let
$$
C_{\rm unif}^{2,b}(\R)=\{u\in C_{\rm unif}^b(\R)\,|\, u^{'}(\cdot),\, u^{''}(\cdot)\in C_{\rm unif}^b(\R)\}.
$$

The next Lemma provides a uniform lower/upper bounds and a poitwise lower/upper bounds  for $\chi_2\lambda_{2}V_{2}(\cdot;u)-\chi_1\lambda_1V_{1}(\cdot;u)$ whenever $u\in \mathcal{E}_{\mu}$.

\begin{lem}\label{Mainlem1}
For every $0<\mu<\min\{1, \sqrt{\frac{\lambda_1}{a}},\ \sqrt{\frac{\lambda_2}{a}}\}$ and $u\in \mathcal{E}_{\mu}$, let $V_{i}(\cdot;u)$ be defined as in \eqref{Inverse of u}, then for every $x\in\R$, there holds
\begin{align}\label{Eq_MainLem1-1}
&(\chi_2\lambda_2V_2-\chi_1\lambda_1V_1)(x;u)\leq\min\Big\{  \nonumber\\ &(\chi_2\mu_2\lambda_2-\chi_1\mu_1\lambda_1)_{+}\min\big\{\frac{C_0}{\lambda_2},\frac{\varphi_{\mu}(x)}{\lambda_2-a\mu^2} \big\}+ \chi_1\mu_1\lambda_1\min\big\{ \frac{C_0(\lambda_1-\lambda_2)_+}{\lambda_1\lambda_2},\frac{\varphi_{\mu}(x)(\lambda_1-\lambda_2)_+}{(\lambda_1-a\mu^2)(\lambda_2-a\mu^2)}\big\},\nonumber\\
 &
 (\chi_2\mu_2\lambda_2-\chi_1\mu_1\lambda_1)_{+}\min\big\{\frac{C_0}{\lambda_1},\frac{\varphi_{\mu}(x)}{\lambda_1-a\mu^2} \big\}+ \chi_2\mu_2\lambda_2\min\big\{ \frac{C_0(\lambda_1-\lambda_2)_+}{\lambda_1\lambda_2},\frac{\varphi_{\mu}(x)(\lambda_1-\lambda_2)_+}{(\lambda_1-a\mu^2)(\lambda_2-a\mu^2)}\big\} \Big\}
\end{align}
and
\begin{align}\label{Eq_MainLem1-2}
&(\chi_2\lambda_2V_2-\chi_1\lambda_1V_1)(x;u) \geq\max\Big\{  \nonumber\\
&- (\chi_2\mu_2\lambda_2-\chi_1\mu_1\lambda_1)_{-}\min\big\{\frac{C_0}{\lambda_2},\frac{\varphi_{\mu}(x)}{\lambda_2-a\mu^2} \big\} + \chi_1\mu_1\lambda_1\min\big\{ \frac{C_0(\lambda_1-\lambda_2)_-}{\lambda_1\lambda_2},\frac{\varphi_{\mu}(x)(\lambda_1-\lambda_2)_-}{(\lambda_1-a\mu^2)(\lambda_2-a\mu^2)}\big\},\nonumber\\
&-(\chi_2\mu_2\lambda_2-\chi_1\mu_1\lambda_1)_{-}\min\big\{\frac{C_0}{\lambda_1},\frac{\varphi_{\mu}(x)}{\lambda_1-a\mu^2} \big\} + \chi_2\mu_2\lambda_2\min\big\{ \frac{C_0(\lambda_1-\lambda_2)_-}{\lambda_1\lambda_2},\frac{\varphi_{\mu}(x)(\lambda_1-\lambda_2)_-}{(\lambda_1-a\mu^2)(\lambda_2-a\mu^2)}\big\} \Big\}
\end{align}
where $C_0:=\frac{a}{b+\chi_2\mu_2-\chi_1\mu_1-M}$ and  $M$ is given by \eqref{const-upper-bound}.
 \end{lem}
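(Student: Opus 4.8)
The plan is to express $(\chi_2\lambda_2V_2-\chi_1\lambda_1V_1)(x;u)$ in a form that separates a "difference" term and a "$\lambda_1$-vs-$\lambda_2$ correction" term, and then bound each using the two estimates available for $V_i(\cdot;u)$: the uniform bound $\|V_i(\cdot;u)\|_\infty\le \frac{\mu_i}{\lambda_i}\|u\|_\infty\le \frac{\mu_i}{\lambda_i}C_0$ coming from \eqref{Estimates on Inverse of u} and $u\in\mathcal E_\mu$, and the pointwise exponential bound obtained by testing $\varphi_\mu$ against the convolution kernel. For the latter, I would first record the elementary identity $V_i(x;\varphi_\mu)=\frac{\mu_i}{\lambda_i-a\mu^2}\varphi_\mu(x)$ (valid since $\mu<\sqrt{\lambda_i/a}$ makes $\lambda_i-a\mu^2>0$ and $\varphi_\mu$ is the relevant eigenfunction — this is essentially \eqref{Eq2 of varphi}). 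Since $0\le u\le U_\mu^+\le\varphi_\mu$ and the kernel in \eqref{Inverse of u} is positive, monotonicity of $u\mapsto V_i(x;u)$ gives $0\le V_i(x;u)\le \frac{\mu_i}{\lambda_i-a\mu^2}\varphi_\mu(x)$ pointwise, and combining with the uniform bound, $0\le V_i(x;u)\le \mu_i\min\{\frac{C_0}{\lambda_i},\frac{\varphi_\mu(x)}{\lambda_i-a\mu^2}\}$.

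The main algebraic step is the decomposition. Writing $\chi_2\lambda_2 V_2-\chi_1\lambda_1 V_1 = (\chi_2\mu_2\lambda_2-\chi_1\mu_1\lambda_1)\cdot\frac{V_2}{\mu_2} + \chi_1\mu_1\lambda_1\big(\frac{V_2}{\mu_2}-\frac{V_1}{\mu_1}\big)$ isolates a scalar multiple of the nonnegative quantity $V_2/\mu_2$ and a multiple of $\frac{V_2}{\mu_2}-\frac{V_1}{\mu_1}$. The first piece is bounded above by $(\chi_2\mu_2\lambda_2-\chi_1\mu_1\lambda_1)_+\min\{\frac{C_0}{\lambda_2},\frac{\varphi_\mu(x)}{\lambda_2-a\mu^2}\}$ (and below by the negative of the corresponding $(\cdot)_-$ expression) using the bounds on $V_2$ just established. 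For the second piece one needs to estimate $\frac{V_2(x;u)}{\mu_2}-\frac{V_1(x;u)}{\mu_1}$, which by \eqref{Inverse of u} equals $\int_0^\infty\int_\R \frac{e^{-\lambda_2 s}-e^{-\lambda_1 s}}{\sqrt{4\pi s}}e^{-|x-z|^2/4s}u(z)\,dz\,ds$; since $e^{-\lambda_2 s}-e^{-\lambda_1 s}$ has the sign of $\lambda_1-\lambda_2$ and $u\ge 0$, this difference has the sign of $\lambda_1-\lambda_2$, and its absolute value is controlled — again via both the uniform and the $\varphi_\mu$-comparison bounds and a partial-fraction identity $\frac{1}{\lambda_2}-\frac{1}{\lambda_1}=\frac{\lambda_1-\lambda_2}{\lambda_1\lambda_2}$ (resp. $\frac{1}{\lambda_2-a\mu^2}-\frac{1}{\lambda_1-a\mu^2}=\frac{\lambda_1-\lambda_2}{(\lambda_1-a\mu^2)(\lambda_2-a\mu^2)}$) — by $\lambda_1\min\{\frac{C_0(\lambda_1-\lambda_2)_+}{\lambda_1\lambda_2},\frac{\varphi_\mu(x)(\lambda_1-\lambda_2)_+}{(\lambda_1-a\mu^2)(\lambda_2-a\mu^2)}\}$ from above and the analogous $(\cdot)_-$ expression from below. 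Multiplying through by $\chi_1\mu_1$ and assembling yields the first entry inside the $\min$ in \eqref{Eq_MainLem1-1} and the first entry inside the $\max$ in \eqref{Eq_MainLem1-2}. Repeating the decomposition with the roles symmetric — i.e. writing $\chi_2\lambda_2V_2-\chi_1\lambda_1V_1=(\chi_2\mu_2\lambda_2-\chi_1\mu_1\lambda_1)\frac{V_1}{\mu_1}+\chi_2\mu_2\lambda_2\big(\frac{V_2}{\mu_2}-\frac{V_1}{\mu_1}\big)$ — produces the second entry in each of the $\min$/$\max$, and taking the smaller (resp. larger) of the two gives the stated bounds.

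The step I expect to be the main obstacle is the careful bookkeeping of the two regimes ($\lambda_1\ge\lambda_2$ vs.\ $\lambda_1<\lambda_2$) together with the two competing pointwise bounds ($C_0/\lambda_i$ vs.\ $\varphi_\mu(x)/(\lambda_i-a\mu^2)$), making sure that the $\min$ inside each term is legitimately preserved when the positive-part/negative-part truncations are applied; concretely, one must check that $\min\{A,B\}\cdot r_+ \le$ the target for all sign patterns of $r=\lambda_1-\lambda_2$, which is where the positive/negative-part notation does the real work. The convolution estimates themselves are routine once the identity $\int_0^\infty\int_\R \frac{e^{-\lambda s}}{\sqrt{4\pi s}}e^{-|x-z|^2/4s}\,dz\,ds=\frac1\lambda$ and its $\varphi_\mu$-analogue are in hand, so the burden is organizational rather than analytic.
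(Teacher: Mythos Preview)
Your proposal is correct and follows essentially the same route as the paper: the paper writes the same two decompositions directly at the level of the heat-kernel integrals (your $V_2/\mu_2$ and $V_2/\mu_2-V_1/\mu_1$ are exactly $\int e^{-\lambda_2 s}\cdots u$ and $\int(e^{-\lambda_2 s}-e^{-\lambda_1 s})\cdots u$), records the identities $\int e^{-\lambda_i s}\cdots\,1=\tfrac{1}{\lambda_i}$ and $\int e^{-\lambda_i s}\cdots\,\varphi_\mu=\tfrac{\varphi_\mu}{\lambda_i-a\mu^2}$, and bounds each piece via $0\le u\le\min\{C_0,\varphi_\mu\}$ and the $(\cdot)_\pm$ device---no separate case analysis on the sign of $\lambda_1-\lambda_2$ is needed. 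One small slip: the difference $\tfrac{V_2}{\mu_2}-\tfrac{V_1}{\mu_1}$ itself is bounded by $\min\{\cdots\}$ (not $\lambda_1\min\{\cdots\}$), and you then multiply by the full coefficient $\chi_1\mu_1\lambda_1$; your final expression is right, just the intermediate attribution of the $\lambda_1$ factor is misplaced.
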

\begin{proof}
Observe that  for every  $\tau\in \{+,-\}$, $i\in\{1,2\}$, and $x\in\R$, we have
\begin{equation}
\int_{0}^{\infty}\int_{\R}e^{-\lambda_{i}s}\frac{e^{-\frac{|x-z|^2}{4s}}}{\sqrt{4\pi s}}dzds=\frac{1}{\lambda_i}, \quad \int_{0}^{\infty}\int_{\R}(e^{-\lambda_{2}s}-e^{-\lambda_{1}s})_{\tau}\frac{e^{-\frac{|x-z|^2}{4s}}}{\sqrt{4\pi s}}dzds=\frac{(\lambda_{1}-\lambda_2)_{\tau}}{\lambda_1 \lambda_2},
\end{equation}
\begin{equation}
\int_{0}^{\infty}\int_{\R}e^{-\lambda_{i}s}\frac{e^{-\frac{|x-z|^2}{4s}}}{\sqrt{4\pi s}}\varphi_{\mu}(z)dzds=\frac{\varphi_{\mu}(x)}{\lambda_i-a\mu^2},
\end{equation}
and
\begin{equation}
\int_{0}^{\infty}\int_{\R}(e^{-\lambda_{2}s}-e^{-\lambda_{1}s})_{\tau}\frac{e^{-\frac{|x-z|^2}{4s}}}{\sqrt{4\pi s}}\varphi_{\mu}(z)dzds=\frac{(\lambda_{1}-\lambda_2)_{\tau}}{(\lambda_1 -a\mu^2)(\lambda_2-a\mu^2)}\varphi_{\mu}(x).
\end{equation}
Let us set
$
C_{0}=\frac{a}{b+\chi_2\mu_2-\chi_1\mu_1-M}.
$ Hence, since $0\leq u\leq \min\{C_0 , \varphi_{\mu}\}$, we obtain
\begin{align}\label{eq005}
( \chi_2\lambda_2 V_2 -\chi_1\lambda_1V_1)(x;u) &= (\chi_2\mu_2\lambda_2-\chi_1\mu_1\lambda_1)\int_0^{\infty}\int_{\R}e^{-\lambda_2 s}\frac{e^{-\frac{|x-z|^2}{4s}}}{(4\pi s)^{\frac{1}{2}}}u(z)dzds\nonumber \\
 & +\chi_1\mu_1\lambda_1 \int_{0}^{\infty}\int_{\R}(e^{-\lambda_{2}s}-e^{-\lambda_{1}s})\frac{e^{-\frac{|x-z|^2}{4s}}}{(4\pi s)^{\frac{1}{2}}}u(z)dzds \nonumber\\
&\leq (\chi_2\lambda_2\mu_2-\chi_1\lambda_1\mu_1)_{+}\min\Big\{\frac{C_0}{\lambda_2}, \frac{\varphi_{\mu}(x)}{\lambda_2-a\mu^2}\Big\}\nonumber\\
&+\chi_1\mu_1\lambda_1\min\Big\{\frac{C_0(\lambda_1-\lambda_2)_{+}}{\lambda_1\lambda_2},\frac{\varphi_{\mu}(x)(\lambda_1-\lambda_2)_{+}}{(\lambda_1-a\mu^2)(\lambda_{2}-a\mu^2)}\Big\}.
\end{align}
Similarly, we have
\begin{align}\label{eq006}
( \chi_2\lambda_2V_2 -\chi_1\lambda_1V_1)(x;u)
&=\chi_2\mu_2\lambda_2\int_0^{\infty}\int_{\R^N}(e^{-\lambda_2 s}-e^{-\lambda_1 s})\frac{e^{-\frac{|x-z|^2}{4s}}}{(4\pi s)^{\frac{1}{2}}}u(z,t)dzds  \nonumber\\
& +(\chi_2\mu_2\lambda_2-\chi_1\mu_1\lambda_1)\int_{0}^{\infty}\int_{\R^N}e^{-\lambda_{1}s}\frac{e^{-\frac{|x-z|^2}{4s}}}{(4\pi s)^{\frac{1}{2}}}u(z,t)dzds, \nonumber\\
& \leq (\chi_2\lambda_2\mu_2-\chi_1\lambda_1\mu_1)_{+}\min\Big\{\frac{C_0}{\lambda_1}, \frac{\varphi_{\mu}(x)}{\lambda_1-a\mu^2}\Big\}\nonumber\\
&+\chi_2\mu_2\lambda_2\min\Big\{\frac{C_0(\lambda_1-\lambda_2)_{+}}{\lambda_1\lambda_2},\frac{\varphi_{\mu}(x)(\lambda_1-\lambda_2)_{+}}{(\lambda_1-a\mu^2)(\lambda_{2}-a\mu^2)}\Big\}.
\end{align}
On the other hand, we have
\begin{align}\label{eq007}
( \chi_2\lambda_2V_2 -\chi_1\lambda_1V_1)(x;u)
&\geq - (\chi_2\mu_2\lambda_2-\chi_1\mu_1\lambda_1)_{-}\int_0^{\infty}\int_{\R}e^{-\lambda_2 s}\frac{e^{-\frac{|x-z|^2}{4s}}}{(4\pi s)^{\frac{1}{2}}}u(z)dzds\nonumber\\  &-\chi_1\mu_1\lambda_1 \int_{0}^{\infty}\int_{\R}(e^{-\lambda_{2}s}-e^{-\lambda_{1}s})_{-}\frac{e^{-\frac{|x-z|^2}{4s}}}{(4\pi s)^{\frac{1}{2}}}u(z)dzds \nonumber\\
&\geq  -(\chi_2\lambda_2\mu_2-\chi_1\lambda_1\mu_1)_{-}\min\Big\{\frac{C_0}{\lambda_2}, \frac{\varphi_{\mu}(x)}{\lambda_2-a\mu^2}\Big\}\nonumber\\
&-\chi_1\mu_1\lambda_1\min\Big\{\frac{C_0(\lambda_1-\lambda_2)_{-}}{\lambda_1\lambda_2},\frac{\varphi_{\mu}(x)(\lambda_1-\lambda_2)_{-}}
{(\lambda_1-a\mu^2)(\lambda_{2}-a\mu^2)}\Big\}
\end{align}
and
\begin{align}\label{eq008}
( \chi_2\lambda_2V_2 -\chi_1\lambda_1V_1)(x,t;u)
&\geq -\chi_2\mu_2\lambda_2\int_0^{\infty}\int_{\R}(e^{-\lambda_2 s}-e^{-\lambda_1 s})_{-}\frac{e^{-\frac{|x-z|^2}{4s}}}{(4\pi s)^{\frac{1}{2}}}u(z,t)dzds  \nonumber\\
& - (\chi_2\mu_2\lambda_2-\chi_1\mu_1\lambda_1)_{-}\int_{0}^{\infty}\int_{\R}e^{-\lambda_{1}s}\frac{e^{-\frac{|x-z|^2}{4s}}}{(4\pi s)^{\frac{1}{2}}}u(z,t)dzds, \nonumber\\
& \geq -(\chi_2\lambda_2\mu_2-\chi_1\lambda_1\mu_1)_{-}\min\Big\{\frac{C_0}{\lambda_1}, \frac{\varphi_{\mu}(x)}{\lambda_1-a\mu^2}\Big\}\nonumber\\
&-\chi_2\mu_2\lambda_2\min\Big\{\frac{C_0(\lambda_1-\lambda_2)_{-}}{\lambda_1\lambda_2},\frac{\varphi_{\mu}(x)(\lambda_1-\lambda_2)_{-}}{(\lambda_1-a\mu^2)(\lambda_{2}-a\mu^2)}\Big\}.
\end{align}

The Lemma thus follows.
\end{proof}

\begin{rk}\label{remark0}  Let $0<\mu<\min\{1, \sqrt{\frac{\lambda_1}{a}},\ \sqrt{\frac{\lambda_2}{a}}\}$ and $u\in \mathcal{E}_{\mu}$ be given.
\begin{enumerate}
\item[(1)] It follows from Lemma \ref{Mainlem1} that
\begin{equation*}
(\chi_2 \lambda_2V_2-\chi_1 \lambda_1V_1)(x;u)\leq \min\{ M C_{0} , \overline{L}_{\mu}\varphi_{\mu}(x)\}.
\end{equation*}
where $M$ is given by \eqref{const-upper-bound}, $C_0=\frac{a}{b+\chi_2\mu_2-\chi_1\mu_1-M}$, and $\bar{L}_{\mu}$ is given by \eqref{L-upper}.

\item[(2)]It follows from Lemma \ref{Mainlem1} that
\begin{equation*}
(\chi_2 \lambda_2V_2-\chi_1 \lambda_1V_1)(x;u)\geq -\min\{ C_{0}\tilde M, \underline{L}_{\mu}\varphi_{\mu}(x)\}.
\end{equation*}
where $C_0=\frac{a}{b+\chi_2\mu_2-\chi_1\mu_1-M}$, $M$ is given by \eqref{const-upper-bound}, $\tilde M$ is given by \eqref{const-lower-bound},
and $\underline{L}_{\mu}$ is given by
\eqref{L-lower}.
\end{enumerate}
\end{rk}

Next, we present a pointwise and uniform estimate  for $|\partial_{x}(\chi_2V_2-\chi_1V_1)(\cdot;u)|$ whenever $u\in \mathcal{E}_{\mu}.$

\begin{lem}\label{Mainlem2} Let  $0<\mu<\min\{1,\frac{\sqrt{\lambda_1}}{\sqrt{a}},\frac{\sqrt{\lambda_2}}{\sqrt{a}}\}$ be fixed. Let $u\in C^{b}_{\rm unif}(\R)$ and $V_{1}(\cdot;u)\in C^{2,b}_{\rm unif}(\R)$ (resp. $V_{2}(\cdot;u)\in C^{2,b}_{\rm unif}(\R)$)  be the corresponding function satisfying the second equation (resp. third equation) of \eqref{Main-eq1}. Then, for every $i,j\in\{1,2\}$, $x \in\R$, and every $u\in\mathcal{E}_{\mu}$, we have
\begin{align}\label{Eq_Mainlem01}
&| \partial_{x}(\chi_{i}V_{i}-\chi_{j}V_{j})(x;u)|\nonumber \\
&\leq |\chi_i\mu_i-\chi_j\mu_j|\min\Big\{ \frac{C_0}{\sqrt{\lambda_i}}\ ,\ \big(\frac{1}{\sqrt{\lambda_i-a\mu^2}}+\frac{\mu\sqrt{a}}{\lambda_i-a\mu^2}\big)\varphi_{\mu}(x)  \Big\}\nonumber\\
& + \chi_j\mu_j\min\Big\{ \frac{C_0|\sqrt{\lambda_i}-\sqrt{\lambda_j}|}{\sqrt{\lambda_i\lambda_j}}\ ,\ \big(\big|\frac{1}{\sqrt{\lambda_i-a\mu^2}}-\frac{1}{\sqrt{\lambda_j-a\mu^2}}\big|+\frac{\mu\sqrt{a}|\lambda_i-\lambda_j|}{(\lambda_i-a\mu^2)(\lambda_j-a\mu^2)}  \big)\varphi_{\mu}(x) \Big\}.
\end{align}
where $C_0:=\frac{a}{b+\chi_2\mu_2-\chi_1\mu_1-M}$ and $M$ is given by \eqref{const-upper-bound}
\end{lem}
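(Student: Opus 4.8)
The plan is to use the explicit resolvent representation of $V_1,V_2$, reduce the estimate to two scalar convolution inequalities (one routine, one requiring care), and close the delicate one with a single elementary monotonicity fact.

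First I would record the representation: evaluating the $s$-integral in \eqref{Inverse of u} (or solving the ODE directly), $V_i(x;u)=\frac{\mu_i}{2\sqrt{\lambda_i}}\int_\R e^{-\sqrt{\lambda_i}|x-y|}u(y)\,dy$ for every $u\in C^b_{\rm unif}(\R)$, hence $\partial_x V_i(x;u)=\frac{\mu_i}{2}\big(\int_x^\infty e^{-\sqrt{\lambda_i}(y-x)}u(y)\,dy-\int_{-\infty}^x e^{-\sqrt{\lambda_i}(x-y)}u(y)\,dy\big)$. I would then note three elementary facts used repeatedly: (a) since $\mu<\sqrt{\lambda_i/a}$, $\int_{-\infty}^x e^{-\sqrt{\lambda_i}(x-y)}\varphi_\mu(y)\,dy=\frac{\varphi_\mu(x)}{\sqrt{\lambda_i}-\mu\sqrt a}$ and $\int_x^\infty e^{-\sqrt{\lambda_i}(y-x)}\varphi_\mu(y)\,dy=\frac{\varphi_\mu(x)}{\sqrt{\lambda_i}+\mu\sqrt a}$, so $\frac1{2\sqrt{\lambda_i}}\int_\R e^{-\sqrt{\lambda_i}|x-y|}\varphi_\mu(y)\,dy=\frac{\varphi_\mu(x)}{\lambda_i-a\mu^2}$ (this also follows from the integral identities already established in the proof of Lemma~\ref{Mainlem1}); (b) $\int_\R\frac12 e^{-\sqrt{\lambda_i}|w|}\,dw=\frac1{\sqrt{\lambda_i}}$; and (c) $\sqrt{\lambda_i}\le\sqrt{\lambda_i-a\mu^2}+\mu\sqrt a$. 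Finally I would split, for $i\ne j$, $\chi_iV_i-\chi_jV_j=(\chi_i\mu_i-\chi_j\mu_j)\frac{V_i}{\mu_i}+\chi_j\mu_j\big(\frac{V_i}{\mu_i}-\frac{V_j}{\mu_j}\big)$, so that by the triangle inequality it suffices to bound $\big|\partial_x\frac{V_i}{\mu_i}(x;u)\big|$ and $\big|\partial_x\big(\frac{V_i}{\mu_i}-\frac{V_j}{\mu_j}\big)(x;u)\big|$ for $u\in\mathcal E_\mu$, i.e. for $0\le u\le\min\{C_0,\varphi_\mu\}$.

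For the first quantity the triangle inequality inside the integral gives $\big|\partial_x\frac{V_i}{\mu_i}(x;u)\big|\le\frac12\int_\R e^{-\sqrt{\lambda_i}|x-y|}u(y)\,dy$; bounding $u$ by $C_0$ and using (b) gives $\le\frac{C_0}{\sqrt{\lambda_i}}$, while bounding $u$ by $\varphi_\mu$ and using (a) gives $\le\frac{\sqrt{\lambda_i}}{\lambda_i-a\mu^2}\varphi_\mu(x)$, which by (c) is $\le\big(\frac1{\sqrt{\lambda_i-a\mu^2}}+\frac{\mu\sqrt a}{\lambda_i-a\mu^2}\big)\varphi_\mu(x)$. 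Taking the minimum yields the first term of the assertion; this step is routine. The hard part is the second quantity. Assume without loss of generality $\lambda_i\ge\lambda_j$. The kernel of $\partial_x\big(\frac{V_i}{\mu_i}-\frac{V_j}{\mu_j}\big)$ is $-\frac12\mathrm{sgn}(w)\big(e^{-\sqrt{\lambda_i}|w|}-e^{-\sqrt{\lambda_j}|w|}\big)$, whose modulus is $\frac12\big(e^{-\sqrt{\lambda_j}|w|}-e^{-\sqrt{\lambda_i}|w|}\big)\ge 0$; hence $\big|\partial_x\big(\frac{V_i}{\mu_i}-\frac{V_j}{\mu_j}\big)(x;u)\big|\le\frac12\int_\R\big(e^{-\sqrt{\lambda_j}|x-y|}-e^{-\sqrt{\lambda_i}|x-y|}\big)u(y)\,dy$. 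With $u\le C_0$ and (b) this is $\le C_0\big(\frac1{\sqrt{\lambda_j}}-\frac1{\sqrt{\lambda_i}}\big)=C_0\frac{|\sqrt{\lambda_i}-\sqrt{\lambda_j}|}{\sqrt{\lambda_i\lambda_j}}$, and with $u\le\varphi_\mu$ and (a) it is $\le\big(\frac{\sqrt{\lambda_j}}{\lambda_j-a\mu^2}-\frac{\sqrt{\lambda_i}}{\lambda_i-a\mu^2}\big)\varphi_\mu(x)$.

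The remaining nontrivial point is that this last coefficient is bounded by the one in the statement, i.e. $\frac{\sqrt{\lambda_j}}{\lambda_j-a\mu^2}-\frac{\sqrt{\lambda_i}}{\lambda_i-a\mu^2}\le\big|\frac1{\sqrt{\lambda_i-a\mu^2}}-\frac1{\sqrt{\lambda_j-a\mu^2}}\big|+\frac{\mu\sqrt a\,|\lambda_i-\lambda_j|}{(\lambda_i-a\mu^2)(\lambda_j-a\mu^2)}$. Writing $\frac{\sqrt\lambda}{\lambda-a\mu^2}=\frac1{\sqrt{\lambda-a\mu^2}}+\frac{\sqrt\lambda-\sqrt{\lambda-a\mu^2}}{\lambda-a\mu^2}$ and using $\lambda_i\ge\lambda_j$, this reduces to showing $\phi(\lambda):=\frac{\sqrt\lambda-\sqrt{\lambda-a\mu^2}-\mu\sqrt a}{\lambda-a\mu^2}$ is non-decreasing on $(a\mu^2,\infty)$ (note $\phi\le 0$ by (c)). I would prove this by the substitution $t=\sqrt\lambda-\mu\sqrt a\;(>0)$, under which $\lambda-a\mu^2=t(t+2\mu\sqrt a)$ and a short manipulation gives $\phi(\lambda)=-\dfrac{\sqrt{1+2\mu\sqrt a/t}\,-\,1}{t+2\mu\sqrt a}$, which is manifestly non-decreasing in $t$ — being minus the product of two nonnegative non-increasing functions of $t$ — and hence in $\lambda$; equivalently one checks $\phi'(\lambda)=(\lambda-a\mu^2)^{-2}\big[\frac{\lambda-a\mu^2}{2\sqrt\lambda}+\frac12\sqrt{\lambda-a\mu^2}-\sqrt\lambda+\mu\sqrt a\big]\ge 0$ directly. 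Taking the minimum of the two bounds for the second quantity and combining with the first term completes the proof; the $i\leftrightarrow j$ symmetry of the difference-term bound shows no generality was lost in assuming $\lambda_i\ge\lambda_j$. One small point to keep in mind throughout is that $\varphi_\mu$ blows up as $x\to-\infty$, but the comparisons $\int e^{-\sqrt{\lambda_i}|x-y|}u(y)\,dy\le\int e^{-\sqrt{\lambda_i}|x-y|}\varphi_\mu(y)\,dy$ remain valid pointwise because the kernels are nonnegative and the majorant integral is finite for $\mu<\min\{\sqrt{\lambda_1/a},\sqrt{\lambda_2/a}\}$.
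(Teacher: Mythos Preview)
Your proof is correct, and the overall decomposition $\chi_iV_i-\chi_jV_j=(\chi_i\mu_i-\chi_j\mu_j)\tfrac{V_i}{\mu_i}+\chi_j\mu_j\big(\tfrac{V_i}{\mu_i}-\tfrac{V_j}{\mu_j}\big)$ is the same one the paper uses. The difference lies in the kernel representation: the paper keeps the heat-semigroup form $V_i(x;u)=\mu_i\int_0^\infty\!\int_\R \tfrac{e^{-\lambda_i s}}{\sqrt{4\pi s}}e^{-|x-z|^2/4s}u(z)\,dz\,ds$, writes $\partial_x(\chi_kV_k)=\tfrac{\chi_k\mu_k}{\sqrt\pi}\int_0^\infty\tfrac{e^{-\lambda_k s}}{\sqrt s}\int_\R \tau e^{-\tau^2}u(x+2\sqrt s\,\tau)\,d\tau\,ds$, and then uses the single estimate $\int_\R|\tau|e^{-\tau^2}\varphi_\mu(x+2\sqrt s\,\tau)\,d\tau\le (1+\mu\sqrt{a\pi s})e^{a\mu^2 s}\varphi_\mu(x)$. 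After integrating in $s$ this produces exactly the constants $\tfrac1{\sqrt{\lambda_i-a\mu^2}}+\tfrac{\mu\sqrt a}{\lambda_i-a\mu^2}$ and $\big|\tfrac1{\sqrt{\lambda_i-a\mu^2}}-\tfrac1{\sqrt{\lambda_j-a\mu^2}}\big|+\tfrac{\mu\sqrt a|\lambda_i-\lambda_j|}{(\lambda_i-a\mu^2)(\lambda_j-a\mu^2)}$ with no further work. You instead pass immediately to the one-dimensional Green's kernel $\tfrac12 e^{-\sqrt{\lambda_i}|w|}$, which is more elementary (no double integral) and in fact yields the sharper intermediate bound $\tfrac{\sqrt{\lambda_i}}{\lambda_i-a\mu^2}\varphi_\mu(x)$; the price is that matching the statement's form for the difference term then requires your monotonicity lemma for $\phi(\lambda)=\tfrac{\sqrt\lambda-\sqrt{\lambda-a\mu^2}-\mu\sqrt a}{\lambda-a\mu^2}$, which the paper's route never needs. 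Both arguments are valid; yours is a bit tighter along the way, the paper's is more direct to the asserted constants.
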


\begin{proof}
For every  $k\in\{1,2\}$, note that
\begin{align*}
 \partial_{x}(\chi_kV_k)(x;u)&=\frac{\chi_k\mu_k}{\sqrt{\pi}}\int_0^\infty\frac{ e^{-\lambda_k s}}{\sqrt{ s}}\Big[\int_{\R}\tau e^{-\tau^2}u(x+2\sqrt{s}\tau )d\tau\Big]ds
\end{align*}
Hence, for every $i,j\in\{1,2\}$, we have
\begin{eqnarray}\label{a-eq1}
 \partial_{x}(\chi_iV_i-\chi_jV_j)(x;u)
&=& \frac{(\chi_i\mu_i-\chi_j\mu_j)}{\sqrt{\pi}}\int_0^\infty\frac{e^{-\lambda_i s}}{\sqrt{s}}\Big[ \int_{\R} \tau e^{-\tau^2}u(x+2\sqrt{s}\tau)d\tau \Big]ds\nonumber\\
& +& \frac{\chi_j\mu_j}{\sqrt{\pi}}\int_0^\infty\frac{(e^{-\lambda_i s}-e^{-\lambda_j s})}{\sqrt{s}}\Big[\int_{\R} \tau e^{-\tau^2}u(x+2\sqrt{s}\tau)d\tau\Big]ds\nonumber\\
\end{eqnarray}
Observe that for every $u\in \mathcal{E}_{\mu}$, $x\in\R$ and $s>0$, we have
\begin{align}\label{a-eq2}
\int_{\R}|\tau|e^{-\tau^2}u(x+2\tau\sqrt{s})d\tau ds& \leq \int_{\R}|\tau|e^{-\tau^2}\varphi_{\mu}(x+2\tau\sqrt{s})d\tau ds\nonumber\\
&=\Big[\int_{\R}|\tau|e^{-(\tau^2+2\tau\mu\sqrt{as})}d\tau \Big]\varphi_{\mu}(x)\nonumber\\
&\leq \Big[\int_{\R}(|\tau|+\mu\sqrt{as})e^{-\tau^2}d\tau \Big]e^{\mu^2as}\varphi_{\mu}(x)\nonumber\\
&=\Big(1+\mu\sqrt{a\pi s}\Big)e^{\mu^2as}\varphi_{\mu}(x)
\end{align} and
\begin{align}\label{a-eq3}
\int_{\R}|\tau|e^{-\tau^2}u(x+2\tau\sqrt{s})d\tau ds& \leq \frac{a}{b+\chi_2\mu_2-\chi_1\mu_1-M}:=C_0.
\end{align}
It follows from \eqref{a-eq1},\eqref{a-eq2} and \eqref{a-eq3} that, for every $u\in\mathcal{E}_\mu,\ x\in\R,\ s>0$ and $i,j\in\{1,2\}$, we have
\begin{align}\label{a-eq4}
&\Big| \partial_{x}(\chi_iV_i-\chi_jV_j)(x;u) \Big|\nonumber\\
& \leq \frac{|\chi_i\mu_i-\chi_j\mu_j|}{\sqrt{\pi}}\min\Big\{ C_0\int_0^\infty\frac{e^{-\lambda_i s}}{\sqrt{s}}ds\ ,\ \varphi_{\mu}(x)\int_0^\infty\frac{e^{-(\lambda_i-a\mu^2)s}}{\sqrt{s}}(1+\mu\sqrt{a\pi s})ds  \Big\}\nonumber\\
&+\frac{\chi_j\mu_j}{\sqrt{\pi}}\min\Big\{C_0\int_0^\infty\frac{|e^{-\lambda_i s}-e^{-\lambda_j s}|}{\sqrt{s}}ds\ ,\ \varphi_{\mu}(x)\int_0^\infty\frac{|e^{-(\lambda_i-a\mu^2) s}-e^{-(\lambda_j -a\mu^2)s}|}{\sqrt{s}}(1+\mu\sqrt{a\pi s})ds\Big\}\nonumber\\
&= |\chi_i\mu_i-\chi_j\mu_j|\min\Big\{ \frac{C_0}{\sqrt{\lambda_i}}\ ,\ \Big(\frac{1}{\sqrt{\lambda_i-a\mu^2}}+\frac{\mu\sqrt{a}}{\lambda_i-a\mu^2}\Big)\varphi_{\mu}(x)  \Big\}\nonumber\\
& + \chi_j\mu_j\min\Big\{ \frac{C_0|\sqrt{\lambda_i}-\sqrt{\lambda_j}|}{\sqrt{\lambda_i\lambda_j}}\ ,\ \Big(\Big|\frac{1}{\sqrt{\lambda_i-a\mu^2}}-\frac{1}{\sqrt{\lambda_j-a\mu^2}}\Big|+\frac{\mu\sqrt{a}|\lambda_i-\lambda_j|}{(\lambda_i-a\mu^2)(\lambda_j-a\mu^2)}  \Big)\varphi_{\mu}(x) \Big\}
\end{align}
The Lemmas thus follows from \eqref{a-eq4}.
\end{proof}

 \begin{rk}\label{remark 1}
 Let $0<\mu<\min\{1, \sqrt{\frac{\lambda_1}{a}},\ \sqrt{\frac{\lambda_2}{a}}\}$ and $u\in \mathcal{E}_{\mu}$ be given.
 \begin{enumerate}
 \item[(1)] It follows from Lemma \ref{Mainlem2} that
 \begin{align}
 |\partial_{x}(\chi_1V_1-\chi_2V_2)(x;u)|\leq K_{\mu}\varphi_{\mu}(x)
 \end{align}
 for every $x\in\R$ and $u\in\mathcal{E}_{\mu}$,  where $K_{\mu}$ is given by
 \eqref{K}.

 \item[(2)] It also follows from Lemma \ref{Mainlem2} that \begin{align}
 |\partial_{x}(\chi_1V_1-\chi_2V_2)(x;u)|\leq \frac{\tilde K a}{b+\chi_2\mu_2-\chi_1\mu_1-M}.
 \end{align}
 for every $x\in\R$ and $u\in\mathcal{E}_{\mu}$, where $\tilde K$ is given by \eqref{D}.
 \end{enumerate}
 \end{rk}

Based on Remarks \ref{remark0} and \ref{remark 1}, we can now present the proof of Theorem \ref{super-sub-solu-thm}.
\begin{proof}[Proof of Theorem \ref{super-sub-solu-thm}]
 For given  $u\in\mathcal{E}_{\mu}$  and $U\in C^{2,1}(\R\times\R)$, let \begin{equation}\label{mathcal L}
\mathcal{L}U=U_{xx}+(c_{\mu}+\partial_{x}(\chi_2V_2-\chi_{1}V_{1})(\cdot;u))U_{x}+(a+(\chi_2\lambda_2V_{2}-\chi_{1}\lambda_1V_{1})(\cdot;u)-(b+\chi_2\mu_2-\chi_1\mu_1)U)U .
\end{equation}
(1)  First, let $C_{0}=\frac{a}{b+\chi_2\mu_2-\chi_1\mu_1-M}$. By Remark \ref{remark0} (1), we have that
\begin{eqnarray}\label{N001}
\mathcal{L}(C_0 )&=& (a+(\chi_2\lambda_2V_{2}-\chi_{1}\lambda_1V_{1})(\cdot;u)-(b+\chi_2\mu_2-\chi_1\mu_1)C_0)C_0\nonumber\\
& \leq & \Big( a +MC_0-(b+\chi_2\mu_2-\chi_{1}\mu_1)C_0\Big)C_0\nonumber\\
&=& 0.
\end{eqnarray}
Hence, we have that $U(x,t)=C_0$ is a super-solution of \eqref{ODE2} on $\R\times\R$.

(2) It follows from Lemmas \ref{Mainlem1} and \ref{Mainlem2}  that
\begin{align*}
\mathcal{L}(\varphi_{\mu})& = \varphi''_{\mu}(x)+(c_{\mu}+\partial_{x}(\chi_2V_2-\chi_1 V_1)(\cdot;u))\varphi_{\mu}'(x)\nonumber\\
&+(a+(\chi_2\lambda_2V_{2}-\chi_1\lambda_1V_1)(\cdot;u)-(b+\chi_2\mu_2-\chi_1\mu_1)\varphi_{\mu})\varphi_{\mu}\nonumber\\
 & = \underbrace{(\varphi''_{\mu}+c_{\mu}\varphi'_{\mu}+a\varphi_{\mu})}_{=0}+\Big(-\mu\sqrt{a}\partial_{x}(\chi_2V_2-\chi_1 V_1)(\cdot;u)+(\chi_2\lambda_2V_{2}-\chi_1\lambda_1V_1)(\cdot;u)\Big)\varphi_{u}\nonumber\\
 &-(b+\chi_2\mu_2-\chi_1\mu_1)\varphi_{\mu}^2\nonumber\\
   & \leq \Big[\mu\sqrt{a}K_{\mu}+\overline{L}_{\mu}- (b+\chi_2\mu_2-\chi_1\mu_1) \Big]\varphi_{\mu}^{2} \leq  0
\end{align*}
whenever \eqref{sup-sub-solu-eq} holds. Hence $U(x,t)=\varphi_{\mu}(x)$ is also a super-solution of \eqref{ODE2} on $\R\times\R$.

(3)  Let $O=(\underline{a}_{\mu,\tilde{\mu},d},\infty)$. Then for $x\in O$, $U_{\mu}^-(x)>0$.
 For $x\in O$, it follows from Lemmas \ref{Mainlem1} and \ref{Mainlem2} that
\begin{align*}
\mathcal{L}(U_{\mu}^{-}
)& = a\mu^2\varphi_{\mu}-a\tilde{\mu}^2d\varphi_{\tilde{\mu}} +(c_{\mu}+\partial_{x}(\chi_2V_2-\chi_1V_1)(\cdot;u))(-\mu\sqrt{a}\varphi_{\mu}+d\sqrt{a}\tilde{\mu}\varphi_{\tilde{\mu}})\nonumber\\
& +(a+( \chi_2\lambda_2V_2-\chi_1\lambda_1V_1)(\cdot;u)-(b+\chi_2\mu_2-\chi_1\mu_1) U_{\mu}^{-})U_{\mu}^{-} \nonumber\\
& =\underbrace{(a\mu^{2}-\sqrt{a}\mu c_{\mu}+a)}_{=0}\varphi_{\mu} +d\underbrace{(\sqrt{a}\tilde{\mu}c_{\mu}-a\tilde{\mu}^{2}-a)}_{=A_{0}}\varphi_{\tilde{\mu}} -\sqrt{a}\mu\partial_x(\chi_2V_2-\chi_1V_1)(\cdot;u)\varphi_{\mu}\nonumber\\
&  + d\sqrt{a}\tilde{\mu}\partial_x(\chi_2V_2-\chi_1\lambda_1V_1)(\cdot;u)\varphi_{\tilde{\mu}}  +(( \chi_2\lambda_2V_2-\chi_1V_1)(\cdot;u)-(b+\chi_2\mu_2-\chi_1\mu_1) U_{\mu}^{-})U_{\mu}^{-}\nonumber\\
& \geq  dA_{0}\varphi_{\tilde{\mu}} -\sqrt{a}K_{\mu}\mu\varphi_{\mu}^2-d\sqrt{a}K_{\mu}\tilde{\mu}\varphi_{\mu}\varphi_{\tilde{\mu}}-\underline{L}_{\mu}\varphi_{\mu}(x)U^{-}_{\mu} -(b+\chi_2\mu_2-\chi_1\mu_1)[U^{-}_{\mu}]^2\nonumber\\
&\geq dA_{0}\varphi_{\tilde{\mu}}-\underbrace{\Big[ \sqrt{a}K_{\mu}\mu+\underline{L}_{\mu}+b+\chi_2\mu_2-\chi_1\mu_1\Big]}_{=A_1}\varphi_{\mu}^2-d^2(b+\chi_2\mu_2-\chi_1\mu_1)\varphi_{\tilde{\mu}}^2 \nonumber\\
& +d\Big[-\sqrt{a}K_{\mu}\tilde{\mu}+\underline{L}_{\mu} +2( b+\chi_2\mu_2-\chi_1\mu_1)\Big]\varphi_{\mu}\varphi_{\tilde{\mu}}\nonumber\\
\end{align*}
Note that  $U_{\mu}^{-}(x)>0$ is equivalent to $\varphi_{\mu}(x)>d\varphi_{\tilde{\mu}}(x)$, which is again equivalent to
$$
d(b+\chi_2\mu_2-\chi_1\mu_1)\varphi_{\mu}(x)\varphi_{\tilde{\mu}}(x)>d^{2}(b+\chi_2\mu_2-\chi_1\mu_1)\varphi^2_{\tilde{\mu}}(x).
$$
Since $A_{1}>0$, thus for $x\in O$, we have
\begin{eqnarray*}
\mathcal{L}U_{\mu}^{-}(x) & \geq &  dA_{0}\varphi_{\tilde{\mu}}(x) -A_{1}\varphi_{\mu}^2(x)\nonumber\\
& & +d\underbrace{\Big[-\sqrt{a}K_{\mu}\tilde{\mu}+\underline{L}_{\mu} + b+\chi_2\mu_2-\chi_1\mu_1\Big]}_{A_{2}}\varphi_{\mu}(x)\varphi_{\tilde{\mu}}(x)\nonumber\\
& =& A_{1}\Big[\frac{dA_{0}}{A_{1}}e^{\sqrt{a}(2\mu-\tilde{\mu})x}-1\Big]\varphi_{\mu}^{2}(x) +dA_{2}\varphi_{\mu}(x)\varphi_{\tilde{\mu}}(x).
\end{eqnarray*}
Note also that, by \eqref{sup-sub-solu-eq},
\begin{eqnarray}\label{Eq1 of Th2}
A_{2}&=& \Big(-\sqrt{a}K_{\mu}\mu -\overline{L}_{\mu}+ b+\chi_2\mu_2-\chi_1\mu_1\Big)+\Big(\overline{L}_{\mu}+\underline{L}_{\mu} -\sqrt{a}K_{\mu}(\tilde{\mu}-\mu) \Big)\nonumber\\
&\geq &  \overline{L}_{\mu}+\underline{L}_{\mu} -\sqrt{a}K_{\mu}(\tilde{\mu}-\mu)\nonumber\\
&\geq & 0,
\end{eqnarray}
whenever $\sqrt{a}K_{\mu}(\tilde{\mu}-\mu)\leq\overline{L}_{\mu}+\underline{L}_{\mu}$.
Observe that
$$
 A_{0}=\frac{a(\tilde{\mu}-\mu)(1-\mu\tilde{\mu})}{\mu}>0,\quad \forall\ 0<\mu<\tilde{\mu}<1.
$$
 Furthermore, we have that $U_{\mu}^{-}(x)>0$ implies that $x>0$ for $d>1$. Thus, for every $ d\geq d_{0}:= \max\{1, \frac{A_{1}}{A_{0}}\}$, we have that
\begin{equation}\label{E1}
\mathcal{L}U_{\mu}^{-}(x) > 0
\end{equation}
whenever $x\in O$, $\sqrt{a}K_{\mu}(\tilde{\mu}-\mu)\leq \overline{L}_{\mu}+\underline{L}_{\mu}$, and $\mu<\tilde{\mu}< \min\{1, 2\mu, \sqrt{\frac{\lambda_1}{a}},\ \sqrt{\frac{\lambda_2}{a}}\}$. Hence $U(x,t)=U_{\mu}^-(x)$ is a sub-solution of \eqref{ODE2} on $(\underline{a}_{\mu,\tilde{\mu},d},\infty)\times\R$.

\smallskip

(4) Observe that $$ a -\frac{ a \tilde M}{b+\chi_2\mu_2-\chi_1\mu_1-M}=\frac{a(b+\chi_2\mu_2-\chi_1\mu_1-M- \tilde M)}{b+\chi_2\mu_2-\chi_1\mu_1-M}>0.$$

Hence, for $0<\delta\ll 1$, we have that
\begin{align*}
\mathcal{L}(U_{\mu}^{-}(x_{\delta}))
&=(a+(\chi_2\lambda_2V_{2}-\chi_1\lambda_1V_1)(x;u)-(b+\chi_2\mu_2-\chi_1\mu_1)U_{\mu}^-(x_\delta))U_{\mu}^-(x_\delta)\nonumber\\
& \geq  (a -\frac{ a\tilde M}{b+\chi_2\mu_2-\chi_1\mu_1-M}-(b+\chi_2\mu_2-\chi_1\mu_1)U_{\mu}^{-}(x_{\delta}))U_{\mu}^{-}(x_{\delta})\\
\end{align*}
where $x_\delta=\underline{a}_{\mu,\tilde{\mu},d}+\delta$. This implies that $U(x,t)=U_{\mu}^-(x_\delta)$ is
a sub-solution of \eqref{ODE2} on $\R\times\R$.
\end{proof}

\section{Traveling wave solutions}

In this section we study the existence and nonexistence  of traveling wave solutions of \eqref{Main-eq1} connecting $(\frac{a}{b},\frac{a\mu_1}{b\lambda_1},\frac{a\mu_2}{b\lambda_2})$ and
$(0,0,0)$, and prove Theorems A and B.

\subsection{Proof of Theorem  A}

In this subsection, we  prove Theorem A. To this end,  we first prove  the following important result.

\begin{tm}\label{existence-tv-thm}
 Assume (H).  Then \eqref{Main-eq1} has a traveling wave solution
$(u(x,t),v_1(x,t),v_2(x,t))=(U(x-c_\mu t),V_1(x-c_\mu t),V_2(x-c_{\mu}t))$ satisfying
$$
\lim_{x\to-\infty}U(x)=\frac{a}{b} \quad \text{and}\quad
\lim_{x\to\infty}\frac{U(x)}{e^{-\sqrt a \mu x}}=1
$$
where $c_{\mu}=\sqrt{a}(\mu+\frac{1}{\mu})$.
\end{tm}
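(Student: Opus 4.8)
The plan is to realise the traveling wave as a fixed point of a ``monotone long‑time limit'' map on the convex set $\mathcal{E}_\mu$ of \eqref{definition-E-mu}, following the strategy announced after Theorem~A. For each $u\in\mathcal{E}_\mu$ I would first freeze the nonlocal terms by setting $V_i=V_i(\cdot;u)$ as in \eqref{Inverse of u} (so that the coefficients of \eqref{ODE2} are bounded and H\"older continuous, using $V_i(\cdot;u)\in C^{2,b}_{\rm unif}(\R)$), and solve the scalar parabolic Cauchy problem \eqref{ODE2} with initial datum $U^+_\mu$. Since $U^+_\mu=\min\{C_0,\varphi_\mu\}$ is the minimum of the two time‑independent super‑solutions furnished by parts~(1) and~(2) of Theorem~\ref{super-sub-solu-thm}, it is itself a time‑independent super‑solution; hence by the parabolic comparison principle $t\mapsto U(\cdot,t;U^+_\mu,u)$ is non‑increasing and stays in $[0,U^+_\mu]$, so it converges pointwise as $t\to\infty$ to a function $\Phi(u)$ which solves the stationary form of \eqref{ODE2}. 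The sub‑solution $U^-_\mu$ of part~(3) of Theorem~\ref{super-sub-solu-thm}, which vanishes at the left endpoint $\underline a_{\mu,\tilde\mu,d}$, combined with nonnegativity of the solution at that endpoint, the inequality $U^+_\mu\ge U^-_\mu$ at $t=0$, and $U^-_\mu\equiv0$ to the left, gives $U(\cdot,t;U^+_\mu,u)\ge U^-_\mu$ for all $t$ by comparison on the half‑line; thus $U^-_\mu\le\Phi(u)\le U^+_\mu$ and $\Phi$ maps $\mathcal{E}_\mu$ into itself.

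Next I would obtain a fixed point of $\Phi$ by Schauder's theorem. Equip $\mathcal{E}_\mu$ with the topology of locally uniform convergence; it is convex and bounded. Continuity of $\Phi$ follows from continuous dependence of $V_i(\cdot;u)$ on $u$, of solutions of \eqref{ODE2} on their coefficients, and of the (monotone) long‑time limit; precompactness of $\Phi(\mathcal{E}_\mu)$ follows because each $\Phi(u)$ solves an elliptic equation whose coefficients are bounded uniformly over $u\in\mathcal{E}_\mu$ (by Lemmas~\ref{Mainlem1} and~\ref{Mainlem2}), so $\{\Phi(u):u\in\mathcal{E}_\mu\}$ is bounded in $C^{2,\nu}_{\rm loc}(\R)$. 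Applying Schauder's theorem on the closed convex hull of $\overline{\Phi(\mathcal{E}_\mu)}$ yields $u^*=\Phi(u^*)\in\mathcal{E}_\mu$, a stationary solution of \eqref{ODE2} with $V_i=V_i(\cdot;u^*)$. Unwinding the identities $\partial_{xx}V_i=\lambda_iV_i-\mu_iu^*$ then shows that $(u^*,V_1(\cdot;u^*),V_2(\cdot;u^*))$ is a stationary solution of \eqref{Main-eq2}, equivalently a traveling wave of \eqref{Main-eq1} of speed $c_\mu$.

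It remains to identify the two limits of $U:=u^*$. At $+\infty$ this is immediate from the sandwich $U^-_\mu\le u^*\le U^+_\mu$: for $x$ large one has $U^+_\mu(x)=e^{-\sqrt a\mu x}$ and $U^-_\mu(x)=e^{-\sqrt a\mu x}\big(1-d e^{-\sqrt a(\tilde\mu-\mu)x}\big)$, so dividing by $e^{-\sqrt a\mu x}$ and letting $x\to\infty$ gives $U(x)/e^{-\sqrt a\mu x}\to1$, in particular $U(\infty)=0$. The hard part will be the limit at $-\infty$. My plan there is: (i) rule out $\liminf_{x\to-\infty}u^*(x)=0$ by comparing $U(\cdot,t;U^+_\mu,u^*)$ on a left half‑line $(-\infty,R]$ with the constant sub‑solution $U^-_\mu(x_\delta)$ of part~(4) of Theorem~\ref{super-sub-solu-thm}, thereby getting a uniform positive lower bound near $-\infty$; (ii) show $\ell:=\lim_{x\to-\infty}u^*(x)$ exists, either by proving $u^*$ is non‑increasing, or by a translation‑compactness argument (shift $u^*(\cdot+n)$ to the left and extract, via the elliptic estimates, a bounded entire solution of the limiting equation, using that the nonlocal term is governed by exponentially decaying kernels so it passes to the limit); (iii) pass to the limit in the stationary equation, where $\partial_x(\chi_2V_2-\chi_1V_1)(x;u^*)\to0$ and $(\chi_2\lambda_2V_2-\chi_1\lambda_1V_1)(x;u^*)\to(\chi_2\mu_2-\chi_1\mu_1)\ell$ as $x\to-\infty$, which forces $\ell(a-b\ell)=0$ and hence $\ell=a/b$ (consistent with $u^*\le C_0$, since $C_0\ge a/b$). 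The corresponding limits $V_i(x;u^*)\to\mu_i\ell/\lambda_i$ then follow. I expect the bookkeeping in (i)--(ii) — the positive lower bound near $-\infty$ and the existence of the left limit — to be the most delicate part of the whole argument.
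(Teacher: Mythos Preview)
Your overall architecture—freeze $V_i=V_i(\cdot;u)$, run the parabolic flow from $U^+_\mu$, take the monotone long-time limit $\Phi(u)$, trap it between $U^-_\mu$ and $U^+_\mu$, and apply Schauder—is exactly the paper's approach, and your treatment of the $+\infty$ asymptotics via the sandwich is correct.

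The gap is precisely where you suspected, in your step (ii) for the left limit. Neither of your two suggestions suffices. Monotonicity of $u^*$ is not established anywhere (the parabolic equation \eqref{ODE2} has $x$-dependent coefficients $V_i(\cdot;u)$, so monotonicity of the initial datum $U^+_\mu$ need not propagate). And translation-compactness alone only gives, for each sequence $x_n\to-\infty$, a subsequential limit $U^*$ which is again a bounded entire stationary solution of the \emph{full nonlocal} system; it does not show $U^*$ is constant. Your step (iii) presupposes that the pointwise limit $\ell$ exists in order to deduce $\ell(a-b\ell)=0$, so (ii) and (iii) together are circular.

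What the paper actually does at this point is import a Liouville-type statement from outside: it invokes the global stability result proved in \cite{SaSh4} (quoted in the introduction) under the hypothesis $b+\chi_2\mu_2>\chi_1\mu_1+K$, which is part of (H) since $K=M+\tilde M$. Concretely, having shown (your step (i)) that any translation limit $U^*$ satisfies $\inf_{x}U^*(x)>0$, the paper observes that $(U^*(x-c_\mu t),V_1(x-c_\mu t;U^*),V_2(x-c_\mu t;U^*))$ is a solution of \eqref{Main-eq1} with strictly positive initial data, hence by that stability theorem converges uniformly to $a/b$; being a traveling profile this forces $U^*\equiv a/b$. Since every subsequential limit equals $a/b$, the full limit $\lim_{x\to-\infty}u^*(x)=a/b$ follows. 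You should replace your (ii)--(iii) by this dynamical argument; without it there is no mechanism ruling out nonconstant bounded positive entire solutions of the limiting nonlocal equation.
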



In order to prove Theorem \ref{existence-tv-thm}, we first prove some lemmas. These Lemmas extend some of the results established in \cite{SaSh2}, so some details might be omitted in their proofs. The reader is referred to the proofs of Lemmas 3.2, 3.3, 3.5 and 3.6 in \cite{SaSh2} for more details.

 In the remaining part of this subsection we shall suppose that \eqref{sup-sub-solu-eq}  holds and $\tilde \mu$ is fixed, where $\tilde \mu$ satisfies
   $$\mu<\tilde{\mu}<\min\{1,\sqrt{\frac{\lambda_1}{a}},\sqrt{\frac{\lambda_1}{a}},2\mu\} \quad \text{and} \quad \sqrt{a}K_\mu(\tilde{u}-\mu)<\overline{L}_\mu+\underline{L}_\mu.
    $$
 Furthermore, we choose  $d=d_0(\chi_1,\mu_1,\lambda_1,\chi_2,\mu_2,\lambda_2,\mu)$ to be the constant given by Theorem \ref{super-sub-solu-thm} and to be fixed. Fix $u\in\mathcal{E}_{\mu}$. For given $u_0\in C_{\rm unif}^b(\R)$, let
  $U(x,t;u_0,u)$ be the solution of \eqref{ODE2} with
$U(x,0;u_0,u)=u_0(x)$. By the arguments in the proofs of Theorem 1.1 and Theorem 1.5 in \cite{SaSh1}, we have $U(x,t; U_{\mu}^+,u)$ exists for all $t>0$ and
${ U(\cdot,\cdot;{ U_{\mu}^+},u)}\in C([0,\infty),C^{b}_{\rm unif}(\R))\cap C^{1}((0\ ,\ \infty),C^{b}_{\rm unif}(\R))\cap C^{2,1}(\R\times(0,\ \infty))$ satisfying
\begin{equation}
U(\cdot,\cdot; U_{\mu}^+,u), U_{x}(\cdot,\cdot; U_{\mu}^+,u),U_{xx}(\cdot,t; U_{\mu}^+,u),U_{t}(\cdot,\cdot; U_{\mu}^+,u)\in  C^{\theta}((0, \infty),C_{\rm unif}^{\nu}(\R))
\end{equation}
for $0<\theta, \nu \ll 1$.

\begin{lem} \label{lm1} Assume (H).
 Then for every $u\in \mathcal{E}_{\mu}$, the following hold.
\begin{description}
\item[(i)] $0\leq U(\cdot,t; U_{\mu}^+,u)\leq U_{ \mu}^{+}(\cdot)$ for every $t\geq 0.$
\item[(ii)] $U(\cdot,t_{2}; U_{\mu}^+,u)\leq U(\cdot,t_{1}; U_{\mu}^+, u) $ for every $0\leq t_{1}\leq t_{2}$.
\end{description}
\end{lem}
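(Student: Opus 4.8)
The plan is to deduce both assertions from the parabolic comparison principle for \eqref{ODE2} with $u\in\mathcal E_\mu$ held fixed, using the barriers supplied by Theorem \ref{super-sub-solu-thm}. With $u$ fixed, \eqref{ODE2} is the semilinear parabolic equation $U_t=U_{xx}+p(x)U_x+\big(a+q(x)\big)U-rU^2$ on $\R$, where $p(x)=c_\mu+\partial_x(\chi_2V_2-\chi_1V_1)(x;u)$, $q(x)=(\chi_2\lambda_2V_2-\chi_1\lambda_1V_1)(x;u)$, and $r=b+\chi_2\mu_2-\chi_1\mu_1$. By \eqref{Estimates on Inverse of u} and Lemmas \ref{Mainlem1}--\ref{Mainlem2} the coefficients $p,q$ are bounded and H\"{o}lder continuous, and by (H) we have $r>M+\tilde M\ge 0$. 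Hence the Cauchy problem for \eqref{ODE2} is well posed in $C^b_{\rm unif}(\R)$ (as already recalled from \cite{SaSh1}), it has a unique bounded solution, and the comparison principle applies. Since all of this parallels \cite{SaSh2}, I would follow the arguments of Lemmas~3.2--3.3 there, pointing out only where the presence of two chemical equations enters through $p$ and $q$.

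\emph{Part (i).} The lower bound is immediate: $U\equiv0$ is a stationary solution of \eqref{ODE2} and $U(\cdot,0;U_\mu^+,u)=U_\mu^+\ge0$, so comparison yields $U(\cdot,t;U_\mu^+,u)\ge0$ for all $t\ge0$. For the upper bound, by Theorem \ref{super-sub-solu-thm}(1)--(2) both the constant $C_0:=\frac{a}{b+\chi_2\mu_2-\chi_1\mu_1-M}$ and the function $\varphi_\mu$ are time-independent super-solutions of \eqref{ODE2} on $\R\times\R$. Comparing $U(\cdot,t;U_\mu^+,u)$ with $C_0$ (both bounded on finite time intervals, and $U_\mu^+\le C_0$) gives $U(\cdot,t;U_\mu^+,u)\le C_0$; comparing next with $\varphi_\mu$ gives $U(\cdot,t;U_\mu^+,u)\le\varphi_\mu$. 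Therefore $U(\cdot,t;U_\mu^+,u)\le\min\{C_0,\varphi_\mu\}=U_\mu^+$ for all $t\ge0$.

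\emph{Part (ii).} Fix $t_0\ge0$ and put $W(x,t)=U(x,t+t_0;U_\mu^+,u)$. Since \eqref{ODE2} is autonomous in $t$, $W$ solves \eqref{ODE2} with initial datum $W(\cdot,0)=U(\cdot,t_0;U_\mu^+,u)$, which by part~(i) satisfies $0\le W(\cdot,0)\le U_\mu^+=U(\cdot,0;U_\mu^+,u)$. Both $W$ and $U(\cdot,\cdot;U_\mu^+,u)$ take values in $[0,C_0]$ by part~(i), so the comparison principle gives $W(\cdot,t)\le U(\cdot,t;U_\mu^+,u)$ for every $t\ge0$, i.e. $U(\cdot,t+t_0;U_\mu^+,u)\le U(\cdot,t;U_\mu^+,u)$. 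Choosing $t=t_1$ and $t_0=t_2-t_1$ yields the stated monotonicity, and in particular $t\mapsto U(\cdot,t;U_\mu^+,u)$ is non-increasing.

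\emph{Main obstacle.} The one nonroutine point is making the comparison with $\varphi_\mu$ rigorous on the unbounded line, since $\varphi_\mu$ blows up as $x\to-\infty$. I would handle this by the standard maximum principle for functions bounded above on $\R$ (a Phragm\'{e}n--Lindel\"{o}f type argument): writing $w=U(\cdot,\cdot;U_\mu^+,u)-\varphi_\mu$, one has $w_t\le w_{xx}+p(x)w_x+c(x,t)w$ with $c(x,t)=a+q(x)-r\big(U(\cdot,\cdot;U_\mu^+,u)+\varphi_\mu\big)$ bounded above, while $w$ is bounded above (by $C_0$, from the previous comparison with the constant barrier) and satisfies $w(x,t)\to-\infty$ as $x\to-\infty$, so $w(\cdot,0)\le0$ forces $w\le0$ for all $t$. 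The lower bound and the monotonicity in (ii) involve only uniformly bounded functions, so there the elementary comparison principle for bounded solutions of semilinear parabolic equations with bounded coefficients suffices.
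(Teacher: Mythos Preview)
Your proof is correct and follows essentially the same route as the paper: apply the parabolic comparison principle with the two super-solutions $C_0$ and $\varphi_\mu$ from Theorem~\ref{super-sub-solu-thm}(1)--(2) to get (i), then use time-autonomy together with (i) to obtain the monotonicity in (ii). Your extra paragraph on the Phragm\'en--Lindel\"of issue for the unbounded barrier $\varphi_\mu$ is a welcome bit of rigor that the paper glosses over with a bare appeal to ``comparison principle for parabolic equations.''
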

\begin{proof}
(i)   Note that $0\leq U^{+}_{\mu}(\cdot)\leq \frac{a}{b+\chi_2\mu_2-\chi_1\mu_1-M}$. Then by
comparison principle for parabolic equations and Theorem \ref{super-sub-solu-thm}(1), we have
\begin{equation*}
0\leq U(x,t; U_{\mu}^+,u)\leq \frac{a}{b+\chi_2\mu_2-\chi_1\mu_1-M} \quad \forall\ x\in\R,\ t\geq 0.
\end{equation*}

Similarly, note that $0\leq U_{\mu}^+(x)\le\varphi_{\mu}(x)$.  Then by  comparison principle for parabolic equations and
Theorem \ref{super-sub-solu-thm}(2)  again, we have
\begin{equation*}
U(x,t;U_{\mu}^+,u)\leq \varphi_{\mu}(x) \ \quad \forall\ x\in\R,\ t\geq 0.
\end{equation*}
Thus $U(\cdot,t;U_{\mu}^+,u)\leq U^{+}_{\mu}$. This complete of (i).

(ii)  For $0\leq t_{1}\leq t_{2}$, since
$$
U(\cdot,t_{2};U_{\mu}^+,u)=U(\cdot,t_{1} ;U(\cdot,t_{2}-t_{1};U_{\mu}^+,u),u)
$$
and by (i), $U(\cdot,t_{2}-t_{1};U_{\mu}^+,u)\leq U^{+}_{\mu} $, (ii) follows from comparison principle for parabolic equations.
\end{proof}

Let us define $U(x; u)$ to be
\begin{equation}
\label{U-eq}
{
U(x; u)=\lim_{t\to\infty}U(x,t; U^{+}_{\mu}, u)=\inf_{t>0}U(x,t; U^{+}_{\mu}, u).
}
\end{equation}
 By the a priori estimates for parabolic equations, the limit in \eqref{U-eq} is uniform in $x$ in compact subsets of $\R$
and $U(\cdot;u)\in C_{\rm unif}^b(\R)$.
Next we prove that the function $u\in \mathcal{E}_{\mu}\to U(\cdot;u)\in\mathcal{E}_{\mu}$.

\begin{lem}\label{lm2}
Assume (H).  Then,
\begin{equation}
U(x;u)\geq\begin{cases}  U^{-}_{\mu}(x),\quad x\ge \underline{a}_{\mu,\tilde{\mu},d}\cr
U_{\mu}^-(x_\delta),\quad x\le x_\delta=\underline{a}_{\mu,\tilde{\mu},d}+\delta
\end{cases}
\end{equation}\label{Eq2 of Th2}
for every $u\in\mathcal{E}_{\mu}$,  $t\geq 0$, and $0<\delta\ll 1$.
\end{lem}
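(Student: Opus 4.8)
The plan is to compare the solution $U(x,t;U_\mu^+,u)$ of \eqref{ODE2} with the sub-solutions furnished by Theorem \ref{super-sub-solu-thm}, and then pass to the limit $t\to\infty$ using \eqref{U-eq}. The key point is that $U_\mu^-$ and $U_\mu^-(x_\delta)$ are sub-solutions of the SAME parabolic problem \eqref{ODE2} (for the given $u\in\mathcal{E}_\mu$) that $U(x,t;U_\mu^+,u)$ solves, so a comparison-principle argument applies once the initial and boundary orderings are checked.

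First I would handle the bound for $x\ge\underline{a}_{\mu,\tilde\mu,d}$. By Theorem \ref{super-sub-solu-thm}(3), $U(x,t)=U_\mu^-(x)$ is a sub-solution of \eqref{ODE2} on the domain $O\times\R$ with $O=(\underline{a}_{\mu,\tilde\mu,d},\infty)$. On the parabolic boundary of $O\times(0,\infty)$ we have: at $t=0$, $U_\mu^-(x)\le U_\mu^+(x)=U(x,0;U_\mu^+,u)$ by the pointwise inequality $U_\mu^-\le U_\mu^+$ established in Section 2; and at the spatial boundary point $x=\underline{a}_{\mu,\tilde\mu,d}$, $U_\mu^-$ vanishes while $U(x,t;U_\mu^+,u)\ge 0$ by Lemma \ref{lm1}(i). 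Hence the comparison principle for parabolic equations on the (unbounded) domain $O$ — justified as usual by the boundedness of all the coefficients, which follows from \eqref{Estimates on Inverse of u} and Remarks \ref{remark0}--\ref{remark 1}, together with the bound $0\le U(\cdot,t;U_\mu^+,u)\le U_\mu^+$ — gives $U(x,t;U_\mu^+,u)\ge U_\mu^-(x)$ for all $x\ge\underline{a}_{\mu,\tilde\mu,d}$ and $t\ge 0$. Letting $t\to\infty$ and invoking \eqref{U-eq} yields $U(x;u)\ge U_\mu^-(x)$ on that half-line.

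Next I would treat the bound $U(x;u)\ge U_\mu^-(x_\delta)$ for $x\le x_\delta$. By Theorem \ref{super-sub-solu-thm}(4), the constant-in-$(x,t)$ function $U(x,t)=U_\mu^-(x_\delta)$ is a sub-solution of \eqref{ODE2} on all of $\R\times\R$ for $0<\delta\ll 1$. Its initial value is the constant $U_\mu^-(x_\delta)$; I need $U_\mu^-(x_\delta)\le U_\mu^+(x)$ for $x\le x_\delta$, which holds because $U_\mu^+$ is non-increasing, so $U_\mu^+(x)\ge U_\mu^+(x_\delta)\ge U_\mu^-(x_\delta)$. Since this sub-solution lives on the whole line, no spatial boundary condition is needed; the comparison principle gives $U(x,t;U_\mu^+,u)\ge U_\mu^-(x_\delta)$ for all $x\in\R$, $t\ge 0$ — in particular for $x\le x_\delta$ — and again passing to the limit $t\to\infty$ via \eqref{U-eq} finishes the claim. (One could alternatively restrict to $x\le x_\delta$ and patch with part one at $x=x_\delta$, but the global sub-solution makes this unnecessary.)

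The main obstacle is the rigorous application of the comparison principle on unbounded domains with the nonlocal, $u$-dependent coefficients $\partial_x(\chi_2V_2-\chi_1V_1)(\cdot;u)$ and $(\chi_2\lambda_2V_2-\chi_1\lambda_1V_1)(\cdot;u)$: one must check these coefficients are bounded and sufficiently regular (Hölder in the relevant sense) so that the standard maximum principle for linear parabolic operators on $\R$ — or on the half-line $O$ with the zero boundary datum — applies, using that the difference $U(x,t;U_\mu^+,u)-U_\mu^-(x)$ is bounded. This is exactly the type of estimate recorded in \eqref{Estimates on Inverse of u}, Lemma \ref{Mainlem1} and Lemma \ref{Mainlem2}, together with Lemma \ref{lm1}(i), so the argument parallels the proof of Lemma 3.5 in \cite{SaSh2}, and I would simply point to that reference for the routine details.
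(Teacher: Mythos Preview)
Your first paragraph, handling $x\ge\underline a_{\mu,\tilde\mu,d}$, is correct and is exactly what the paper does.

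The second paragraph contains a genuine gap. You verify the initial ordering $U_\mu^-(x_\delta)\le U_\mu^+(x)$ only for $x\le x_\delta$, and then assert that ``since this sub-solution lives on the whole line, no spatial boundary condition is needed; the comparison principle gives $U(x,t;U_\mu^+,u)\ge U_\mu^-(x_\delta)$ for all $x\in\R$.'' But a comparison on all of $\R$ requires the initial ordering on all of $\R$, and that ordering is \emph{false}: $U_\mu^+(x)=e^{-\sqrt a\,\mu x}\to 0$ as $x\to\infty$, while $U_\mu^-(x_\delta)$ is a fixed positive number, so $U_\mu^+(x)<U_\mu^-(x_\delta)$ for large $x$. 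Hence the global comparison you invoke does not apply, and the conclusion $U(x,t;U_\mu^+,u)\ge U_\mu^-(x_\delta)$ for all $x\in\R$ is not justified (and is in fact false at $t=0$).

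The fix is precisely the ``alternative'' you dismissed: restrict the comparison to the half-line $(-\infty,x_\delta]$ and supply the lateral boundary condition $U(x_\delta,t;U_\mu^+,u)\ge U_\mu^-(x_\delta)$, which you already have from the first part because $x_\delta>\underline a_{\mu,\tilde\mu,d}$. This is exactly how the paper argues, and it is not optional here.
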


\begin{proof} Let $u\in \mathcal{E}_{\mu}$ be fixed.  Let $O=(\underline{a}_{\mu,\tilde{\mu},d},\infty)$.
Note that $U_{\mu}^-(\underline{a}_{\mu,\tilde{\mu},d})=0$. By Theorem \ref{super-sub-solu-thm}(3),
$U_{\mu}^-(x)$ is a sub-solution of \eqref{ODE2} on $O\times (0,\infty)$.
Note also that $U_{\mu}^+(x)\ge U_{\mu}^-(x)$ for $x\ge \underline{a}_{\mu,\tilde{\mu},d}$ and $U(\underline{a}_{\mu,\tilde{\mu},d},t;U_{\mu}^+, u)>0$
for all $t\ge 0$. Then by comparison principle for parabolic equations, we have that
$$
U(x,t;U_{\mu}^+,u)\ge U_{\mu}^-(x)\quad \forall \,\, x\ge \underline{a}_{\mu,\tilde{\mu},d},\,\, t\ge 0.
$$

Now for any $0<\delta\ll 1$, by Theorem \ref{super-sub-solu-thm}(4), $U(x,t)=U_{\mu}^-(x_\delta)$ is a sub-solution of
\eqref{ODE2} on $\R\times \R$. Note that $U_{\mu}^+(x)\ge U_\mu^-(x_\delta)$ for $x\le x_\delta$ and
$U(x_\delta,t;U_{\mu}^+,u)\ge U_{\mu}^-(x_\delta)$ for $t\ge 0$. Then by comparison principle for parabolic equations again,
$$
U(x,t;U_{\mu}^+,u)\ge U_{\mu}^-(x_\delta)\quad \forall\,\, x\le x_\delta,\, \, t>0.
$$
The lemma then follows.
\end{proof}

\begin{rk}\label{Remark-lower-bound-for -solution}
 It follows from Lemmas \ref{lm1} and \ref{lm2} that if \eqref{sup-sub-solu-eq} holds, then
$$
U_{\mu,\delta}^{-}(\cdot)\leq U(\cdot,t;U_{\mu}^+,u)\leq U^{+}_{\mu}(\cdot)$$
for every $u\in\mathcal{E}_{\mu}$, $t\geq0$ and $0\le \delta\ll 1$, where
$$
U_{\mu,\delta}^-(x)=\begin{cases}  U^{-}_{\mu}(x),\quad x\ge \underline{a}_{\mu,\tilde{\mu},d}+\delta\cr
U_\mu^-(x_\delta),\quad x\le x_\delta=\underline{a}_{\mu,\tilde{\mu},d}+\delta.
\end{cases}
$$
 This implies that $$
U_{\mu,\delta}^{-}(\cdot)\leq U(\cdot;u)\leq U^{+}_{\mu}(\cdot)$$
for every $u\in\mathcal{E}_{\mu}$. Hence  $u\in\mathcal{E}_{\mu}\mapsto U(\cdot;u)\in \mathcal{E}_{\mu}.$
\end{rk}

\begin{lem}\label{MainLem02}
Assume (H).
 Then for every $u\in \mathcal{E}_{\mu}$ the associated function $U(\cdot;u)$ satisfied the elliptic equation,
\begin{equation}\label{Eq_MainLem02}
0=U_{xx}+(c_{\mu}+\partial_{x}(\chi_2V_2-\chi_1 V_1)(\cdot;u))U_{x}+(a+(\chi_2\lambda_2V_2-\chi_1\lambda_1 V_1)(\cdot;u)-(b+\chi_2\mu_2-\chi_1\mu_1)U)U
\end{equation}
\end{lem}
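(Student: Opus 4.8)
The plan is to obtain \eqref{Eq_MainLem02} by passing to the limit $t\to\infty$ in the parabolic equation \eqref{ODE2} satisfied by $U(\cdot,\cdot;U_\mu^+,u)$, combining the monotonicity in $t$ from Lemma \ref{lm1}(ii) with interior parabolic regularity. First I would record that, for fixed $u\in\mathcal{E}_\mu$, the coefficients occurring in \eqref{ODE2}, namely $c_\mu+\partial_x(\chi_2V_2-\chi_1V_1)(\cdot;u)$ and $a+(\chi_2\lambda_2V_2-\chi_1\lambda_1V_1)(\cdot;u)$, are independent of $t$, bounded on $\R$ by \eqref{Estimates on Inverse of u}, and Hölder continuous: indeed $V_i(\cdot;u)$ solves the elliptic equation $\partial_{xx}V_i=\lambda_iV_i-\mu_iu$, so $V_i(\cdot;u)\in C^{2,b}_{\rm unif}(\R)$ and $\partial_xV_i(\cdot;u)$ is Lipschitz, hence lies in $C^\nu_{\rm unif}(\R)$ for every $0<\nu<1$. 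Thus \eqref{ODE2} is a uniformly parabolic equation with bounded Hölder coefficients and a smooth nonlinearity in $U$, which is exactly the setting in which interior parabolic Schauder estimates apply.

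Next I would introduce the time-translates $U_n(x,t):=U(x,t+n;U_\mu^+,u)$ for $n\in\N$. Each $U_n$ solves \eqref{ODE2} on $\R\times(-n,\infty)$, and by Lemmas \ref{lm1}(i) and \ref{lm2} the family $\{U_n\}_n$ is uniformly bounded, $0\le U_{\mu,\delta}^-\le U_n\le U_\mu^+$. Interior parabolic Schauder estimates then bound $\|U_n\|_{C^{2+\nu,1+\nu/2}(Q)}$ uniformly in $n$ on every bounded space–time region $Q\subset\R\times(0,\infty)$; hence, after extracting a subsequence, $U_n\to\tilde U$ in $C^{2,1}_{\rm loc}(\R\times(0,\infty))$, and $\tilde U$ again solves \eqref{ODE2}. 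On the other hand, by Lemma \ref{lm1}(ii) and the definition \eqref{U-eq}, for every fixed $(x,t)$ we have $U_n(x,t)=U(x,t+n;U_\mu^+,u)\downarrow U(x;u)$ as $n\to\infty$. Therefore $\tilde U(x,t)=U(x;u)$ for all $(x,t)$, so $\tilde U$ is independent of $t$ and $\partial_t\tilde U\equiv 0$; inserting this into \eqref{ODE2} yields precisely \eqref{Eq_MainLem02}. Moreover, elliptic regularity for \eqref{Eq_MainLem02} (bounded Hölder coefficients) upgrades $U(\cdot;u)$ to $C^{2,\nu}_{\rm unif}(\R)$, consistent with the remark following \eqref{U-eq}.

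The routine but essential point to check is the uniformity in $n$ of the interior Schauder constants underlying the compactness step; this is where it matters that the coefficients of \eqref{ODE2} are $t$-independent, bounded, and Hölder, together with the uniform bound $0\le U_n\le U_\mu^+$. A minor technicality is that the a priori estimates quoted in the preamble hold only for $t$ bounded away from $0$, but this is harmless here since we evaluate at times $t+n$ with $n\to\infty$. I expect no genuine obstacle beyond this: the scheme is the standard one for identifying a monotone limit of parabolic solutions as a stationary solution, and since it essentially repeats the arguments of Lemmas~3.5–3.6 in \cite{SaSh2}, I would keep the write-up short and cite that reference for the parabolic estimates.
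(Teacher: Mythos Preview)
Your argument is correct and follows the paper's overall strategy: introduce time-translates $U_n(x,t)=U(x,t+t_n;U_\mu^+,u)$, extract a subsequence converging in $C^{2,1}_{\rm loc}$, and identify the limit with the $t$-independent function $U(\cdot;u)$ via the monotonicity of Lemma~\ref{lm1}(ii), whence $\partial_t$ vanishes and \eqref{Eq_MainLem02} follows. The only real difference is the compactness mechanism: you invoke interior parabolic Schauder estimates directly (legitimate here since the coefficients are $t$-independent, bounded, and H\"older, and $0\le U_n\le U_\mu^+$), whereas the paper works through the variation-of-constants formula for the semigroup generated by $\Delta-I$ and derives uniform bounds in the fractional power spaces $X^\beta={\rm Dom}((I-\Delta)^\beta)$, $0<\beta<\tfrac12$, following \cite{SaSh2}. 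Both routes are standard and equivalent for this lemma; the paper's choice has the incidental payoff that the explicit bound $\sup_{t\ge T}\|U(\cdot,t;U_\mu^+,u)\|_{X^\beta}\le M_T$, with $M_T$ independent of $u\in\mathcal{E}_\mu$, is reused verbatim in Step~1 of the proof of Theorem~\ref{existence-tv-thm} to show compactness of the fixed-point map $u\mapsto U(\cdot;u)$.
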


\begin{proof} The following arguments generalized the arguments used in the proof of Lemma 4.6 in \cite{SaSh2}. Hence we refer to \cite{SaSh2} for the proofs of the estimates stated below.

 Let $\{t_{n}\}_{n\geq 1}$ be an increasing sequence of positive real numbers converging to $\infty$. For every $n\geq 1$, define $U_{n}(x,t)=U(x,t+t_{n}; U_{\mu}^+, u)$ for every $x\in\R, \ t\geq 0$.
For every $n$, $U_{n}$ solves the PDE
\begin{equation*}
\begin{cases}
\partial_{t}U_{n}=\partial_{xx}U_{n}+(c_{\mu}+\partial_x(\chi_2V_2-\chi_1V_1)(\cdot;u))\partial_{x}U_{n}\\
\qquad \qquad +(a+(\chi_2\lambda_2V_2-\chi_1\lambda_1)(\cdot;u)-(b+\chi_2\mu_2-\chi_1\mu_1)U_{n})U_{n}\\
U_{n}(\cdot,0)=U(\cdot,t_{n}; U_{\mu}^+, u).
\end{cases}
\end{equation*}

Let $\{T(t)\}_{t\geq 0}$ be the analytic semigroup on $C^{b}_{\rm unif}(\R)$ generated by $\Delta-I$
 and  let $X^{\beta}={\rm Dom}((I-\Delta)^{\beta})$ be the fractional power spaces of $I-\Delta$ on $C_{\rm unif}^b(\R)$ ($\beta\in [0,1]$).

 The variation of constant formula and the fact that $\partial_{xx}V_{i}(\cdot;u)-\lambda_{i}V=-\mu_i u$ yield that
\begin{eqnarray}\label{variation -of-const}
U(\cdot,t;U_{\mu}^+, u)
&=& \underbrace{T(t)U_{\mu}^{+}}_{I_{1}(t)}+ \underbrace{\int_{0}^{t}T(t-s)(((c_{\mu}+\partial_{x}(\chi_2V_2-\chi_1V_1)(\cdot;u))U(\cdot,s; U_{\mu}^+, u))_{x})(s)ds}_{I_{2}(t)}\nonumber \\
& +&\underbrace{\int_{0}^{t}T(t-s)(1+a+(\chi_2\lambda_2V_2-\chi_1\lambda_1V_1)(\cdot; u))U(\cdot,s;U_{\mu}^+, u)ds}_{I_{3}(t)}\nonumber\\
&-&(b+\chi_2\mu_2-\chi_1\mu_1)\underbrace{\int_{0}^{t}T(t-s)U^{2}(\cdot,s;U_{\mu}^+, u)ds}_{I_{4}(t)}.\nonumber\\
\end{eqnarray}
Let $0<\beta<\frac{1}{2}$ be fixed. There is a positive constant $C_{\beta}$,  (see \cite{Dan Henry}), such that
\begin{equation*}
\|I_{1}(t)\|_{X^{\beta}}\leq \frac{aC_\beta t^{-\beta}e^{-t}}{b+\chi_2\mu_2-\chi_1\mu_1-M},
\end{equation*}
\begin{equation*}\|I_{2}(t)\|_{X^{\beta}}\leq  \frac{aC_{\beta}}{b+\chi_2\mu_2-\chi_1\mu_1-M}(c_{\mu}+\frac{a\tilde K}{b+\chi_2\mu_2-\chi_1\mu_1-M})\Gamma(\frac{1}{2}-\beta),
\end{equation*}
\smallskip
$$
\|I_{3}(t)\|_{X^{\beta}}
 \leq  \frac{aC_{\beta}((1+a)(b+\chi_2\mu_2-\chi_1\mu_1-M) +a(\chi_2\mu_2+\chi_1\mu_1))}{(b+\chi_2\mu_2-\chi_1\mu_1-M)^2}\Gamma(1-\beta),
 $$
 and
 $$ \ \ \|I_{4}(t)\|_{X^{\beta}}\leq \frac{a^2 C_{\beta}}{(b+\chi_2\mu_2-\chi_1\mu_1-M)^{2}}\Gamma(1-\beta).
$$
Note that we have used Lemma \ref{Mainlem2}, mainly  the fact that $|\partial_{x}(\chi_2V_{2}-\chi_1V_1)(\cdot;u)|\leq \frac{ a\tilde K}{b+\chi_2\mu_2-\chi_1\mu_1-M}$, to obtain the uniform upper bound estimates for $\|I_{2}(t)\|_{X^{\beta}}$. Therefore, for every $T>0$ we have that
\begin{equation}\label{Eq_Convergence01}
\sup_{t\geq T}\|U(\cdot,t;U_{\mu}^+,u)\|_{X^{\beta}}\leq M_{T}<\infty,
\end{equation}
where
\begin{equation}\label{Eq_Conv02}
M_{T}= \frac{aC_{\beta}(1+ \Gamma(1-\beta)+\Gamma(\frac{1}{2}-\beta))}{b+\chi_2\mu_2-\chi_1\mu_1-M}\Big[\frac{T^{-\beta}}{e^{T}}+ \big( c_{\mu}+  \frac{(1+a)(b+\chi_2\mu_2)+a(\tilde K+(\chi_2\mu_2+\chi_1\mu_1))}{b+\chi_2\mu_2-\chi_1\mu_1-M}\big)\Big].
\end{equation}
Hence, it follows  from \eqref{Eq_Convergence01} that
\begin{equation}\label{Eqq000}
\sup_{n\geq 1, t\geq 0}\|U_{n}(\cdot,t)\|_{X^{\beta}}\leq M_{t_{1}}<\infty.
\end{equation}
Next, for every $t,h\geq 0$ and $n\geq 1$, we have that
\begin{equation}\label{Eqq00}
\|I_{1}(t+h+t_{n})-I_{1}(t+t_{n})\|_{X^{\beta}}\leq C_{\beta}h^{\beta}(t+t_{n})^{-\beta}e^{-(t+t_n)}\|U_{\mu}^{+}\|_{\infty}\leq C_{\beta}h^{\beta}t_{1}^{-\beta}e^{-t_1}\|U_{\mu}^{+}\|_{\infty},
\end{equation}
\begin{align}\label{Eqq02}
&\|I_{2}(t+t_n+h)-I_{2}(t+t_n)\|_{X^{\beta}}\nonumber\\
&\leq \frac{aC_{\beta}}{b+\chi_2\mu_2-\chi_1\mu_1-M}(c_\mu+\frac{ a\tilde{K}}{b+\chi_2\mu_2-\chi_1\mu_1-M})\Big[h^{\beta}\Gamma(\frac{1}{2	}-\beta)+\frac{h^{\frac{1}{2}-\beta}}{\frac{1}{2}-\beta} \Big] ,
\end{align}
\begin{align}\label{Eqq01}
&\|I_{3}(t+h+t_n)-I_{3}(t+t_n)\|_{X^{\beta}}\nonumber\\
&\leq \frac{aC_{\beta}((1+a)(b+\chi_2\mu_2-\chi_1\mu_1-M)+a(\chi_2\mu_2+\chi_1\mu_1))}
{(b+\chi_2\mu_2-\chi_1\mu_1-M)^2}\Big[h^{\beta}\Gamma(1-\beta)+\frac{h^{1-\beta}}{1-\beta} \Big] ,
\end{align}
and
\begin{eqnarray}\label{Eqq03}
\|I_{4}(t+t_n+h)-I_{4}(t+t_n)\|_{X^{\beta}}\leq\frac{a^2 C_{\beta}}{(b+\chi_2\mu_2-\chi_1\mu_1-M)^2}\Big[h^{\beta}\Gamma(1-\beta)+\frac{h^{1-\beta}}{1-\beta} \Big].
\end{eqnarray}
It follows from inequalities \eqref{Eqq000}, \eqref{Eqq00}, \eqref{Eqq01}, \eqref{Eqq02} and \eqref{Eqq03}, the functions $U_{n} : [0, \infty)\to X^{\beta}$ are uniformly bounded and equicontinuous.
Since $X^{\beta}$ is continuously imbedded in $C^{\nu}(\R)$ for every $0\leq \nu<2\beta$ (See \cite{Dan Henry}),
therefore, the Arzela-Ascoli Theorem and Theorem 3.15 in   \cite{Friedman}, imply that there is a function $\tilde{U}(\cdot,\cdot;u)\in C^{2,1}(\R\times(0,\infty))$ and a subsequence $\{U_{n'}\}_{n\geq 1}$ of $\{U_{n}\}_{n\geq 1}$ such that $U_{n'}\to \tilde{U}$ in $C^{2,1}_{loc}(\R\times(0, \infty))$ as $n\to \infty$ and $\tilde{U}(\cdot,\cdot;u)$ solves the PDE
$$
\begin{cases}
\partial_{t}\tilde{U}=\partial_{xx}\tilde{U}+(c_{\mu}+\partial_{x}(\chi_2 V_2-\chi_1V_1)(\cdot;u)\partial_{x}\tilde{U}\\
\quad \qquad+(a+(\chi_2\lambda_2V_2-\chi_1\lambda_1V_1)(\cdot;u)-(b+\chi_2\mu_2-\chi_1\mu_1)\tilde{U})\tilde{U}, \ t>0\\
\tilde{U}(x,0)=\lim_{n\to \infty}U(x,t_{n'}; U_{\mu}^+, u).
\end{cases}
$$
But $U(x;u)=\lim_{t\to \infty}U(x,t; U_{\mu}^+, u)$ and $t_{n'}\to \infty$ as $n\to \infty$, hence $\tilde{U}(x,t;u)=U(x;u)$ for every $x\in \R,\ t\geq 0$. Hence $U(\cdot;u)$ solves \eqref{Eq_MainLem02}.
\end{proof}

\begin{lem}
\label{aux-lm}  Assume (H).
  Then, for any given $u\in\mathcal{E}_{\mu}$,
\eqref{Eq_MainLem02} has a unique bounded non-negative solution satisfying that
\begin{equation}
\label{aux-eq1}
\liminf_{x\to -\infty}U(x)>0\quad {\rm and}\quad \lim_{x\to\infty}\frac{U(x)}{e^{- \sqrt a\mu x}}=1.
\end{equation}
\end{lem}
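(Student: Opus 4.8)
Throughout write $r=b+\chi_2\mu_2-\chi_1\mu_1>0$, and for the fixed $u\in\mathcal E_\mu$ put $p_u(x)=c_\mu+\partial_x(\chi_2V_2-\chi_1V_1)(x;u)$ and $q_u(x)=a+(\chi_2\lambda_2V_2-\chi_1\lambda_1V_1)(x;u)$, so that \eqref{Eq_MainLem02} reads $U''+p_uU'+(q_u-rU)U=0$. By Remarks \ref{remark0} and \ref{remark 1} both $p_u-c_\mu$ and $q_u-a$ are bounded, both are dominated by $\varphi_\mu$ and hence tend to $0$ as $x\to+\infty$, and both are Lipschitz on $\R$ (for $q_u$ this uses $\partial_{xx}V_i=\lambda_iV_i-\mu_i u\in C^b_{\rm unif}(\R)$).

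\textbf{Existence.} The plan is just to take $U(\cdot;u)$ from Lemma \ref{MainLem02}, which is already a bounded nonnegative solution of \eqref{Eq_MainLem02}, and to read \eqref{aux-eq1} off the squeeze $U_{\mu,\delta}^-\le U(\cdot;u)\le U_\mu^+$ of Remark \ref{Remark-lower-bound-for -solution}. For $x$ large one has $U_\mu^+(x)=\varphi_\mu(x)$ and $U_{\mu,\delta}^-(x)=\varphi_\mu(x)-d\varphi_{\tilde\mu}(x)$, so $1-d\,e^{-\sqrt a(\tilde\mu-\mu)x}\le U(x;u)/\varphi_\mu(x)\le 1$, and since $\tilde\mu>\mu$ this gives $\lim_{x\to\infty}U(x;u)/e^{-\sqrt a\mu x}=1$. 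For $x\le x_\delta$ one has $U(x;u)\ge U_\mu^-(x_\delta)>0$, a fixed positive constant, whence $\liminf_{x\to-\infty}U(x;u)>0$.

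\textbf{Uniqueness.} Let $U_1,U_2$ be two bounded nonnegative solutions of \eqref{Eq_MainLem02} satisfying \eqref{aux-eq1}. Neither is identically zero, so the strong maximum principle forces $U_i>0$ on $\R$. By \eqref{aux-eq1} there are $\delta_0>0$ and $x_0\in\R$ with $U_i\ge\delta_0$ on $(-\infty,x_0]$; together with the boundedness of $U_1$ this bounds $U_1/U_2$ on $(-\infty,x_0]$, while on $[x_0,\infty)$ the common tail yields $U_1/U_2\to1$. Hence $\rho:=\sup_{\R}U_1/U_2\in[1,\infty)$, and it suffices to show $\rho=1$, for then $U_1\le U_2$ and, by symmetry, $U_1=U_2$. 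Suppose $\rho>1$ and set $W=\rho U_2-U_1\ge0$. A direct computation from the two equations gives
\[
W''+p_uW'+(q_u-2r\rho U_2)W=r\rho(1-\rho)U_2^{\,2}-rW^2<0\qquad\text{on }\R,
\]
the inequality being strict since $\rho>1$ and $U_2>0$. Now $W/U_2=\rho-U_1/U_2$, so $\inf_{\R}W/U_2=0$. If this infimum were attained at a finite point $x_*$, then $W(x_*)=0$ would be a global, hence interior, minimum of $W$, giving $W'(x_*)=0$, $W''(x_*)\ge0$, and a nonnegative left-hand side at $x_*$ — impossible. Otherwise the infimum is only approached, and not as $x\to+\infty$ (there $W/U_2\to\rho-1>0$), so along some $x_n\to-\infty$ with $U_1(x_n)/U_2(x_n)\to\rho$. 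Since $p_u-c_\mu$ and $q_u-a$ are bounded and Lipschitz and the $U_i$ are bounded, the translates $U_i(x_n+\cdot)$ together with the translated coefficients are precompact in $C^2_{\rm loc}(\R)$ by Arzel\`a--Ascoli and interior elliptic estimates; a subsequence converges to bounded nonnegative $V_1,V_2$ solving a limiting equation $V''+\tilde pV'+(\tilde q-rV)V=0$, with $V_2\ge\delta_0>0$ (because $x_n\to-\infty$), $V_1/V_2\le\rho$ on $\R$ and $V_1(0)/V_2(0)=\rho$. Applying the displayed identity to $\tilde W=\rho V_2-V_1$ at its interior minimum $x=0$ reproduces the same contradiction. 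Hence $\rho=1$.

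\textbf{Where the difficulty lies.} The existence part is essentially bookkeeping with the sub- and supersolutions already constructed in Theorem \ref{super-sub-solu-thm}. The heart of the lemma is the uniqueness, and within it the case in which $U_1/U_2$ attains its supremum only in the limit $x\to-\infty$: this is exactly where the translation-compactness argument — and hence the uniform regularity of $V_i(\cdot;u)$ — is needed, since one cannot assume $U_1,U_2$ have limits at $-\infty$ under hypothesis \eqref{aux-eq1} alone. All of these steps follow closely the proofs of Lemmas 3.2, 3.3, 3.5 and 3.6 in \cite{SaSh2}, to which we refer for the computational details.
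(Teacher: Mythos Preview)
Your proof is correct and essentially matches the paper's approach: the paper gives no independent argument for Lemma \ref{aux-lm} but defers to \cite[Lemma~3.6]{SaSh2}, and what you have written is precisely a spelled-out version of that sup-of-ratio / strong maximum principle / translation-compactness proof, together with the obvious extraction of \eqref{aux-eq1} from the squeeze of Remark \ref{Remark-lower-bound-for -solution}.
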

The proof of Lemma \ref{aux-lm} follows from  \cite[Lemma 3.6]{SaSh2}.

We now prove Theorem \ref{existence-tv-thm}.

\begin{proof}[Proof of Theorem  \ref{existence-tv-thm}]
Following the proof of Theorem 3.1 in \cite{SaSh2},
 let us consider the normed linear space  $\mathcal{E}=C^{b}_{\rm unif}(\R)$ endowed with the norm
$$\|u\|_{\ast}=\sum_{n=1}^{\infty}\frac{1}{2^n}\|u\|_{L^{\infty}([-n,\ n])}. $$
For every $u\in\mathcal{E}_{\mu}$ we have that
$$\|u\|_{\ast}\leq \frac{a}{b+\chi_2\mu_2-\chi_1\mu_1-M}. $$
Hence $\mathcal{E}_{\mu}$ is a bounded convex subset of $\mathcal{E}$. Furthermore, since the convergence in $\mathcal{E}$ implies the pointwise convergence, then $\mathcal{E}_{\mu}$ is a closed, bounded, and convex subset of $\mathcal{E}$. Furthermore, a sequence of functions in $\mathcal{E}_{\mu}$ converges with respect to norm $\|\cdot\|_{\ast}$ if and only if it  converges locally uniformly on $\R$.

We prove that the mapping $\mathcal{E}_{\mu}\ni u\mapsto U(\cdot;u)\in\mathcal{E}_{\mu}$ has a fixed point. We divide the proof in three steps.

\smallskip

\noindent {\bf Step 1.} In this step, we prove that the mapping $\mathcal{E}_{\mu}\ni u\mapsto U(\cdot;u)\in \mathcal{E}_{\mu}$ is compact.

 Let $\{u_{n}\}_{n\geq 1}$ be a sequence of elements of $\mathcal{E}_{\mu}$. Since $U(\cdot;u_{n})\in \mathcal{E}_{\mu}$ for every $n\geq 1$ then $\{U(\cdot;u_{n})\}_{n\geq 1}$ is clearly uniformly bounded by $\frac{a}{b+\chi_2\mu_2-\chi_1\mu_1-M}$. Using inequality \eqref{Eq_Convergence01}, we have that
\begin{equation*}
\sup_{t\geq 1}\|U(\cdot,t; U_{\mu}^+,u_{n})\|_{X^{\beta}}\leq M_{1}
\end{equation*}
for all $n\geq 1$ where $M_{1}$ is given by \eqref{Eq_Conv02}. Therefore there is $0<\nu\ll 1$ such that
\begin{equation}\label{Proof-MainTh3- Eq1}
\sup_{t\geq 1}\|U(\cdot,t; U_{\mu}^+,u_{n})\|_{C^{\nu}_{\rm unif}(\R)}\leq \tilde{M_{1}}
\end{equation} for every $n\geq 1$ where $\tilde{M_{1}}$ is a constant depending only on $M_{1}$. Since for every $n\geq 1$ and every $x\in\R$, we have that $U(x,t; U_{\mu}^+,u_{n})\to U(x;u_{n})$ as $t\to \infty,$ then it follows from \eqref{Proof-MainTh3- Eq1} that
\begin{equation}\label{Prof-MainTh3- Eq2}
\|U(\cdot;u_{n})\|_{C^{\nu}_{\rm unif}}\leq \tilde{M_{1}}
\end{equation} for every $n\geq 1$. Which implies that the sequence $\{U(\cdot;u_{n})\}_{n\geq 1}$ is equicontinuous. The Arzela-Ascoli's Theorem implies that there is a subsequence $\{U(\cdot;u_{n'})\}_{n\geq 1}$ of the sequence $\{U(\cdot;u_{n})\}_{n\geq 1}$ and a function $U\in C(\R)$ such that $\{U(\cdot;u_{n'})\}_{n\geq 1}$ converges to $U$ locally uniformly on $\R$. Furthermore, the function $U$ satisfies inequality \eqref{Prof-MainTh3- Eq2}. Combining this with the fact  $U_{\mu}^{-}(x)\leq U(x;u_{n'})\leq U_{\mu}^{+}(x)$ for every $x\in\R$ and $n\geq 1$, by letting $n$ goes to infinity, we obtain that  $U\in \mathcal{E}_{\mu}$. Hence  the mapping $\mathcal{E}_{\mu}\ni u\mapsto U(\cdot;u)\in \mathcal{E}_{\mu}$ is compact.

\smallskip

\noindent{\bf Step 2.} In this step, we prove that the mapping $\mathcal{E}_{\mu}\ni u\mapsto U(\cdot;u)\in \mathcal{E}_{\mu}$ is continuous.
 This follows from the arguments used in the proof of Step 2, Theorem 3.1, \cite{SaSh2}

Now by Schauder's Fixed Point Theorem, there is $U\in\mathcal{E}_{\mu}$ such that $U(\cdot;U)=U(\cdot)$. Then
$(U(x),V(x;U))$ is a stationary solution of \eqref{Main-eq2} with $c=c_\mu$. It is clear that
$$
\lim_{x\to\infty}\frac{U(x)}{e^{- \sqrt a\mu x}}=1.
$$

\noindent {\bf Step 3.} We claim that
$$
\lim_{x\to -\infty}U(x)=\frac{a}{b}.
$$
For otherwise, we may assume that there is $x_n\to -\infty$ such that $U(x_n)\to \lambda\not =\frac{a}{b}$ as $n\to\infty$. Define $U_{n}(x)=U(x+x_{n})$ for every $x\in\R$ and $n\geq 1$. By the arguments of Lemma \ref{MainLem02},  there is a subsequence $\{U_{n'}\}_{n\geq 1}$ of $\{U_{n}\}_{n\geq 1}$ and a function  $U^*\in C_{\rm unif}^b(\R)$ such that $\|U_{n'}-U^*\|_{\ast}\to 0$ as $n\to \infty$. Moreover, $(U^*,V_1(\cdot;U^*),V_2(\cdot;U^*))$ is also a stationary solution of \eqref{Main-eq2} with $c=c_\mu$.

\smallskip

\noindent {\bf Claim 1.} $\inf_{x\in\R}U^{*}(x)>0$.

 Indeed, let $0< \delta\ll 1$ be fixed. For  every $x\in\R$, there $N_{x}\gg 1$ such that $x+x_{n'}< x_{\delta}$ for all $n\geq N_{x}$. Hence, It follows from Remark \ref{Remark-lower-bound-for -solution} that  $$0< U_{\mu,\delta}^{-}(x_{\delta})\leq U(x+x_{n'}) \ \forall\ n\geq N_{x}.$$
Letting $n$ goes to infinity in the last inequality, we obtain that $U_{\mu,\delta}^{-}(x_{\delta})\leq U^{*}(x)$ for every $x\in\R$. The claim thus follows.

\noindent {\bf Claim 2.}  $U^*(\cdot)=\frac{a}{b}$.

  Note that $K={ M + \tilde M}$. Note also  that the function $(\tilde U(x,t),\tilde V_1(x,t),\tilde V_2(x,t))=(U^{*}(x-c_{\mu}t),V_1(x-c_{\mu}t,U^{*}),V_2(x-c_{\mu}t,U^{*}))$ solves \eqref{Main-eq1}. Then by \cite[Theorem B]{SaSh4} and Claim 1,
 $$
 \lim_{t\to\infty}\sup_{x\in\R}|U^*(x-c_\mu t)-\frac{a}{b}|=0.
 $$
 This implies that $U^*(x)=\frac{a}{b}$ for any $x\in\R$ and the claim thus follows.

 By Claim 2, we have $\lim_{n\to \infty} U(x_n)=U^*(0)=\frac{a}{b}$, which contracts to $\lim_{n\to \infty} U(x_n)=U^*(0)=\lambda\not =\frac{a}{b}$.
\end{proof}

 Now, we are ready to prove Theorem A.

\begin{proof}[Proof of Theorem A] Using \eqref{mu-lim}, we have
  \begin{equation}\label{zzz0}
  \lim_{\mu\to 0^+}\Big[\mu\sqrt{a}K_{\mu}+\overline{L}_{\mu}\Big]=M.
  \end{equation}
  This combined with the fact that $\tilde M\geq0$ and $M+\tilde M+\chi_1\mu_1<b+\chi_2\mu_2$ imply that there is $0<\mu_0\ll 1$ such that
  \begin{equation}\label{zzz1}
  \mu\sqrt{a}K_{\mu}+\overline{L}_{\mu}\leq b+\chi_2\mu_2-\chi_1\mu_1, \quad \forall\ 0<\mu<\mu_0.
  \end{equation}
Next, let us define $\mu^{*}$ to be
\begin{align*}
\mu^{*}:=\sup\Big\{ \bar{\mu}\in \Big(0\ ,\ \min\{ 1,\sqrt{\frac{\lambda_1}{a}},\sqrt{\frac{\lambda_2}{a}} \}\Big)   \ : \ \mu\sqrt{a}K_{\mu}+\overline{L}_{\mu}\leq b+\chi_2\mu_2-\chi_1\mu_1,\ \forall\ 0<\mu\leq \bar{\mu}\Big\}
\end{align*}
and
\begin{equation}
c^{*}(\chi_1,\mu_1,\lambda_1,\chi_2,\mu_2,\lambda_2):=\lim_{\mu\to \mu^{*-}}c_{\mu}
\end{equation}
where $c_{\mu}=\sqrt{a}(\mu+\frac{1}{\mu})$. Clearly, it follows from \eqref{zzz1} that $\mu^*\geq \mu_0>0$.  Let $c>c^{*}(\chi_1,\mu_1,\lambda_1$, $\chi_2,\mu_2,\lambda_2)$ be given and let $\mu\in(0, \mu^*)$ be the unique solution of the equation $c_{\mu}=c$. Then $\mu, K_{\mu}$ and $\overline{L}_\mu$ satisfy \eqref{sup-sub-solu-eq}. It follows from Theorem \ref{existence-tv-thm}, that  \eqref{Main-eq01} has a traveling wave solution $(U(x,t),V_{1}(x,t),V_2(x,t))=(U(x-ct),V_1(x-ct),V_2(x-ct))$ with speed c connecting $(\frac{a}{b},\frac{a\mu_1}{b\lambda_1},\frac{a\mu_2}{b\lambda_2})$ and $(0,0,0)$. Moreover $\lim_{z\to\infty}\frac{U(z)}{e^{-\mu z}}=1$.

Note that
$$
\lim_{(\chi_{1},\chi_2)\to (0^+,0^+)}K_{\mu}=\lim_{(\chi_{1},\chi_2)\to (0^+,0^+)}\overline{L}_{\mu}=0.
$$ Thus, we have that
$$\lim_{(\chi_{1},\chi_2)\to (0^+,0^+)}\mu^{*}(\chi_1\mu_1,\lambda_1,\chi_2,\mu_2,\lambda_2)=\min\{1,\sqrt{\frac{\lambda_1}{a}}, \sqrt{\frac{\lambda_2}{a}}\}.$$
From what it follows that
\begin{equation*}
\lim_{(\chi_{1},\chi_2)\to (0^+,0^+)}c^{*}(\chi_1,\mu_1,\lambda_1,\chi_2,\mu_2,\lambda_2)=\begin{cases}
2\sqrt{a}\quad \text{if}\quad  a\leq \min\{\lambda_1, \lambda_2\}\\
\frac{a+\lambda_1}{\sqrt{\lambda_1}}\quad \text{if}\quad  \lambda_1\leq \min\{a, \lambda_2\}\\
\frac{a+\lambda_2}{\sqrt{\lambda_2}}\quad \text{if}\quad  \lambda_2\leq \min\{a, \lambda_1\}
\end{cases},\quad \forall \lambda_1,\lambda_2,\mu_1,\mu_2>0.
\end{equation*}
\end{proof}

\medskip

Next, for completeness,  we present the proof of Remark B.
\begin{proof}[Proof of Remark B]
Observe that if $\lambda_{1}=\lambda_2=\lambda$ then $$
M=(\chi_2\mu_2-\chi_1\mu_1)_+,\quad  \tilde M=(\chi_2\mu_2-\chi_1\mu_1)_-,\quad \overline{L}_{\mu}=\frac{\lambda(\chi_2\mu_2-\chi_1\mu_1)_+}{\lambda-a\mu^2}, \quad$$
$$
K_{\mu}=\frac{|\chi_1\mu_1-\chi_2\mu_2|}{2}\Big(\frac{1}{\sqrt{\lambda-a\mu^2}}+\frac{\mu\sqrt{a}}{\lambda-a\mu^2} \Big), \quad M +\tilde M=|\chi_1\mu_1-\chi_2\mu_2|.
$$
\noindent{\bf Sub-case 1.} $|\chi_1\mu_1-\chi_2\mu_2|=0$.\\ In this case we have that $K_{\mu}=\overline{L}_{\mu}=0$ for every $0<\mu<\min\Big\{1, \sqrt{\frac{\lambda}{a}}\Big\}$. Hence, $\mu^*=\min\Big\{1, \sqrt{\frac{\lambda}{a}}\Big\}$ and
$$
c^{*}(\chi_1,\mu_1,\lambda,\chi_2,\mu_2,\lambda)=\begin{cases}2\sqrt{a}\quad \text{if}\quad a\leq \lambda\cr
\frac{a+\lambda}{\sqrt{\lambda}} \quad\ \text{if}\quad a\geq \lambda.
\end{cases}
$$
\noindent{\bf Sub-case 2.} $\chi_1\mu_1-\chi_2\mu_2>0$.\\ In this case we have that $\overline{L}_{\mu}=0$ for every $0<\mu<\min\Big\{1, \sqrt{\frac{\lambda}{a}}\Big\}$. 
Moreover, we have
$$
M +\tilde M+\chi_1\mu_1<b+\chi_2\mu_1 \Leftrightarrow |\chi_1\mu_1-\chi_2\mu_2|<\frac{b}{2}
$$
and
$$
\mu\sqrt{a}K_{\mu}+\overline{L}_{\mu}\leq b+\chi_2\mu_2-\chi_1\mu_1   \Leftrightarrow \mu\sqrt{a}\Big(\frac{1}{\sqrt{\lambda-a\mu^2}} +\frac{\mu\sqrt{a}}{\lambda-a\mu^2}\Big) \leq \frac{2(b-|\chi_1\mu_1-\chi_2\mu_1|)}{|\chi_1\mu_1-\chi_2\mu_2|}.
$$
Since the function $\Big(0\ ,\ \sqrt{\frac{\lambda}{a}}\Big)\ni\mu\mapsto \frac{\mu\sqrt{a}}{\sqrt{\lambda-a\mu^2}} +\frac{a\mu^2}{\lambda-a\mu^2}$ is strictly increasing and continuous, we have that
$$
\mu^*=\sup\Big\{\mu\in(0, \min\{1, \sqrt{\frac{\lambda}{a}}\})\ :\  \frac{\mu\sqrt{a}}{\sqrt{\lambda-a\mu^2}} +\frac{a\mu^2}{\lambda-a\mu^2}\leq \frac{2(b-|\chi_1\mu_1-\chi_2\mu_1|)}{|\chi_1\mu_1-\chi_2\mu_2|}   \Big\}.
$$
Hence
$$
\lim_{\chi_1\mu_1-\chi_2\mu_2\to 0^{+}}\mu^{*}=\min\{1, \sqrt{\frac{\lambda}{a}}\}.
$$
Which implies that
$$
\lim_{\chi_1\mu_1-\chi_2\mu_2\to 0^{+}}c^{*}(\chi_1,\mu_1,\lambda,\chi_2,\mu_2,\lambda)=\begin{cases}2\sqrt{a}\quad \text{if}\quad a\leq \lambda\cr
\frac{a+\lambda}{\sqrt{\lambda}} \quad\ \text{if}\quad a\geq \lambda.
\end{cases}
$$

\noindent{\bf Sub-case 3.} $\chi_2\mu_2-\chi_1\mu_1>0$.

\smallskip
 In this case we have that $\tilde M=0$.
Moreover, we have
$$
M+ \tilde M+\chi_1\mu_1<b+\chi_2\mu_1 \Leftrightarrow b>0
$$
and
$$
\mu\sqrt{a}K_{\mu}+\overline{L}_{\mu}\leq b+\chi_2\mu_2-\chi_1\mu_1   \Leftrightarrow\Big(\frac{\mu\sqrt{a}}{\sqrt{\lambda-a\mu^2}} +\frac{a\mu^2}{\lambda-a\mu^2}\Big) \leq \frac{2(b+|\chi_1\mu_1-\chi_2\mu_1|)}{|\chi_1\mu_1-\chi_2\mu_2|}
$$
Since the function $\Big(0 , \sqrt{\frac{\lambda}{a}}\Big)\ni\mu\mapsto \frac{\mu\sqrt{a}}{\sqrt{\lambda-a\mu^2}} +\frac{a\mu^2+\lambda}{\lambda-a\mu^2}$ is strictly increasing and continuous, we have that
$$
\mu^*=\sup\Big\{\mu\in(0, \min\{1, \sqrt{\frac{\lambda}{a}}\})\ \Big |\  \frac{\mu\sqrt{a}}{\sqrt{\lambda-a\mu^2}} +\frac{a\mu^2+\lambda}{\lambda-a\mu^2}\leq \frac{2(b+|\chi_1\mu_1-\chi_2\mu_1|)}{|\chi_1\mu_1-\chi_2\mu_2|}   \Big\}.
$$
Hence
$$
\lim_{\chi_2\mu_2-\chi_1\mu_1\to 0^{+}}\mu^{*}=\min\{1, \sqrt{\frac{\lambda}{a}}\}.
$$
Which implies that
$$
\lim_{\chi_1\mu_1-\chi_2\mu_2\to 0^{+}}c^{*}(\chi_1,\mu_1,\lambda,\chi_2,\mu_2,\lambda)=\begin{cases}2\sqrt{a}\quad \text{if}\quad a\leq \lambda\cr
\frac{a+\lambda}{\sqrt{\lambda}} \quad\ \text{if}\quad a\geq \lambda.
\end{cases}
$$
\end{proof}

\subsection{Proof of Theorem C}

In this subsection, we prove Theorem C.
To do so, we first recall the following two  lemmas from \cite{SaSh3}.

\begin{lem}
\label{nonexistence-lm1}
\begin{itemize}
\item[(1)]
Let $0\le c<2\sqrt a$ be  fixed  and $\lambda_0>0$ be such that $c^2-4 a+4\lambda_0<0$. Let $\lambda_D(L)$ be the principal eigenvalue of
\begin{equation}
\label{ev-eq0}
\begin{cases}
\phi_{xx}+c \phi_x +a\phi =\lambda \phi,\quad 0<x<L\cr
\phi(0)=\phi(L)=0.
\end{cases}
\end{equation}
Then there is $L>0$ such that  $\lambda_D(L)=\lambda_0$.

\item[(2)] Let $c$ and $L$ be as in (1). Let $\lambda_D(L;b_1,b_2)$ be the principal eigenvalue of
\begin{equation}
\label{ev-eq1}
\begin{cases}
\phi_{xx}+(c + b_1(x))\phi_x +(a+ b_2(x))\phi =\lambda \phi,\quad 0<x<L\cr
\phi(0)=\phi(L)=0,
\end{cases}
\end{equation}
where $b_1(x)$ and $b_2(x)$ are continuous functions.
 If there is a $C^2$ function $\phi(x)$ with $\phi(x)>0$ for
$0<x<L$  such that
\begin{equation}
\label{ev-eq2}
\begin{cases}
\phi_{xx}+(c + b_1(x))\phi_x +(a+ b_2(x))\phi  \le 0,\quad 0<x<L\cr
\phi(0)\ge 0 ,\quad \phi(L)\ge 0
\end{cases}
\end{equation}
 Then $\lambda_D(L,b_1,b_2)\le 0$.
\end{itemize}
\end{lem}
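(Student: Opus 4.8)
The proofs of the two parts are essentially independent: (1) reduces to an explicit one–dimensional eigenvalue computation, while (2) is the classical fact that the existence of a positive supersolution with nonnegative boundary values forces the principal eigenvalue of a (non–self–adjoint) second order operator to be nonpositive. I would carry them out in that order.

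For (1), the plan is to remove the drift by the substitution $\phi(x)=e^{-cx/2}\psi(x)$. A one–line computation turns \eqref{ev-eq0} into
\begin{equation*}
-\psi_{xx}=\Big(a-\tfrac{c^2}{4}-\lambda\Big)\psi \ \ \text{on}\ (0,L),\qquad \psi(0)=\psi(L)=0,
\end{equation*}
and since the principal Dirichlet eigenvalue of $-\partial_{xx}$ on $(0,L)$ is $\pi^2/L^2$ with positive eigenfunction $\sin(\pi x/L)$, we obtain the closed form $\lambda_D(L)=a-\tfrac{c^2}{4}-\tfrac{\pi^2}{L^2}$. This is continuous and strictly increasing in $L\in(0,\infty)$, with $\lambda_D(L)\to-\infty$ as $L\to 0^{+}$ and $\lambda_D(L)\to a-\tfrac{c^2}{4}$ as $L\to\infty$. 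The hypotheses $0\le c<2\sqrt a$ and $c^2-4a+4\lambda_0<0$ say exactly that $0<\lambda_0<a-\tfrac{c^2}{4}$, so $\lambda_0$ lies in the range of $L\mapsto\lambda_D(L)$ and the intermediate value theorem supplies the required $L$ (unique, in fact). There is no real difficulty here.

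For (2), I would argue by duality. Write $\mathcal{A}\phi=\phi_{xx}+(c+b_1)\phi_x+(a+b_2)\phi$ and set $\lambda:=\lambda_D(L;b_1,b_2)$; let $\psi_1>0$ on $(0,L)$, $\psi_1(0)=\psi_1(L)=0$, be the principal eigenfunction of the formal adjoint problem $\mathcal{A}^{*}\psi=\lambda\psi$ on $(0,L)$ with $\mathcal{A}^{*}\psi=\psi_{xx}-\big((c+b_1)\psi\big)_x+(a+b_2)\psi$; the existence and positivity of $\psi_1$, and the fact that $\mathcal{A}$ and $\mathcal{A}^{*}$ have the same principal eigenvalue, are the usual Krein–Rutman/Perron–Frobenius statements for the (compact, positivity–preserving) resolvents. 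Multiplying the differential inequality in \eqref{ev-eq2} by $\psi_1\ge 0$, integrating over $(0,L)$ and integrating by parts to transfer the derivatives onto $\psi_1$, I expect the identity
\begin{equation*}
\lambda\int_0^L \phi\,\psi_1\,dx=\int_0^L (\mathcal{A}\phi)\,\psi_1\,dx+\phi(L)\,\psi_{1,x}(L)-\phi(0)\,\psi_{1,x}(0),
\end{equation*}
all remaining boundary contributions vanishing because $\psi_1(0)=\psi_1(L)=0$. On the right–hand side $\int_0^L(\mathcal{A}\phi)\psi_1\le 0$ by \eqref{ev-eq2}; $\psi_{1,x}(L)\le 0$ and $\psi_{1,x}(0)\ge 0$ since $\psi_1>0$ in $(0,L)$ vanishes at the endpoints (sharply so, by the Hopf lemma); and $\phi(0),\phi(L)\ge 0$ by hypothesis. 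Hence $\lambda\int_0^L\phi\,\psi_1\,dx\le 0$, while $\int_0^L\phi\,\psi_1\,dx>0$ because $\phi>0$ and $\psi_1>0$ on $(0,L)$, and therefore $\lambda\le 0$.

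The one point that needs care — and the main obstacle — is the regularity of the adjoint problem when $b_1$ is only continuous, so that $\big((c+b_1)\psi\big)_x$ is not a classical object. I would handle this in the standard way: either read $\mathcal{A}^{*}\psi=\lambda\psi$ in the weak sense, so that the integration–by–parts identity above becomes a definition rather than a computation, or approximate $b_1,b_2$ by smooth coefficients and pass to the limit using continuity of the principal eigenvalue in the coefficients (in the intended application $b_1,b_2$ are anyway smooth). An alternative that avoids the adjoint altogether is a sliding argument: assuming $\lambda>0$, slide $\kappa\phi$ downward until it first touches the principal eigenfunction $\varphi_1$ of \eqref{ev-eq1} from above, and read off a contradiction at the contact point from the interior maximum principle (or the Hopf lemma if the contact is at an endpoint). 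I would present the duality argument as the cleaner route, as in \cite{SaSh3}.
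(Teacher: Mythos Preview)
Your proof is correct. For part (1) the exponential substitution reduces the problem to the explicit Dirichlet spectrum of $-\partial_{xx}$ on $(0,L)$, giving $\lambda_D(L)=a-\tfrac{c^2}{4}-\tfrac{\pi^2}{L^2}$, from which the existence of the desired $L$ is immediate by the intermediate value theorem; for part (2) your duality computation via the adjoint eigenfunction is accurate (the boundary identity you wrote is exactly what integration by parts yields once $\psi_1(0)=\psi_1(L)=0$ kills the remaining terms), and your remark about the regularity of $b_1$ is well taken, though in the use made of the lemma in this paper the coefficients are smooth.

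Regarding the comparison: the paper does \emph{not} give its own proof of this lemma. It is stated (together with the companion Lemma~\ref{nonexistence-lm2}) as a result ``recalled'' from \cite{SaSh3}, and is then applied directly in the proof of Theorem~C. Your write-up therefore supplies more detail than the paper itself, and is a sound self-contained argument. The alternative sliding/maximum-principle route you mention at the end is also valid and is perhaps closer in spirit to how such statements are often proved in the cited literature, but the adjoint argument you present is clean and complete.
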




\begin{lem}
\label{nonexistence-lm2}
\begin{itemize}
\item[(1)] Let $c<0$ be fixed and let $\lambda_0>0$ be such that $0<\lambda_0<a$.
Let $\lambda_{N,D}(L)$ be the principal eigenvalue of
\begin{equation}
\label{ev-eq3}
\begin{cases}
\phi_{xx}+c \phi_x +a\phi =\lambda \phi,\quad 0<x<L\cr
\phi_x(0)=\phi(L)=0.
\end{cases}
\end{equation}
Then there is $L>0$ such that  $\lambda_{N,D}(L)=\lambda_0$.

\item[(2)] Let $c$ and $L$ be as in (1). Let $\lambda_{N,D}(L;b_1,b_2)$ be the principal eigenvalue of
\begin{equation}
\label{ev-eq4}
\begin{cases}
\phi_{xx}+(c + b_1(x))\phi_x +(a+ b_2(x))\phi =\lambda \phi,\quad 0<x<L\cr
\phi_x(0)=\phi(L)=0,
\end{cases}
\end{equation}
where $b_1(x)$ and $b_2(x)$ are continuous functions.
 If there is a $C^2$ function $\phi(x)$ with $\phi(x)>0$ for
$0<x<L$  such that
\begin{equation}
\label{ev-eq5}
\begin{cases}
\phi_{xx}+(c + b_1(x))\phi_x +(a+ b_2(x))\phi  \le 0,\quad 0<x<L\cr
\phi_x(0)\le  0 ,\quad \phi(L)\ge 0
\end{cases}
\end{equation}
 Then $\lambda_{N,D}(L,b_1,b_2)\le 0$.
\end{itemize}
\end{lem}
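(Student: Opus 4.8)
The plan is to prove the two parts separately: part (1) is a continuity-plus-monotonicity argument for the principal eigenvalue viewed as a function of the interval length, and part (2) is the standard fact that a positive supersolution forces the principal eigenvalue to be nonpositive, adapted to the mixed Neumann–Dirichlet boundary conditions.

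For part (1), I would first put the problem in self-adjoint form by multiplying through by the weight $\rho(x)=e^{cx}$: the equation $\phi_{xx}+c\phi_x+a\phi=\lambda\phi$ becomes $(\rho\phi_x)_x+a\rho\phi=\lambda\rho\phi$, and the boundary conditions $\phi_x(0)=0,\ \phi(L)=0$ are unchanged (since $\rho(0)=1$, the condition $\phi_x(0)=0$ is precisely the natural/flux condition at $0$). Hence $\lambda_{N,D}(L)=a-\mu(L)$, where
$$\mu(L)=\inf\Big\{\frac{\int_0^L e^{cx}|\phi_x|^2\,dx}{\int_0^L e^{cx}\phi^2\,dx}\ :\ \phi\in H^1(0,L),\ \phi(L)=0,\ \phi\not\equiv0\Big\}.$$
Zero-extension of test functions shows $\mu(L)$ is non-increasing in $L$, standard Sturm–Liouville theory gives its continuity, and together with the Krein–Rutman machinery this guarantees $\lambda_{N,D}(L)$ is simple with a positive eigenfunction. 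I would then establish $\mu(L)\to+\infty$ as $L\to0^+$ — bound $e^{cx}\in[e^{cL},1]$ on $(0,L)$ and compare with the first Neumann–Dirichlet eigenvalue $\pi^2/(4L^2)$ of $-\phi_{xx}$ — and $\mu(L)\to0$ as $L\to\infty$, using the explicit test function equal to $1$ on $[0,L-1]$ and linear down to $0$ on $[L-1,L]$; here $c<0$ is essential, since it makes $\int_0^{\infty}e^{cx}\,dx$ finite and $e^{cL}\to0$. Therefore $\lambda_{N,D}(L)$ is continuous and non-decreasing on $(0,\infty)$ with $\lambda_{N,D}(L)\to-\infty$ as $L\to0^+$ and $\lambda_{N,D}(L)\to a$ as $L\to\infty$, so by the intermediate value theorem every value in $(-\infty,a)$, in particular every $\lambda_0\in(0,a)$, is attained at some $L>0$.

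For part (2), set $\rho(x)=\exp\big(\int_0^x(c+b_1(s))\,ds\big)$, so that $\rho\,\mathcal Mu=(\rho u_x)_x+(a+b_2)\rho u$, where $\mathcal Mu:=u_{xx}+(c+b_1)u_x+(a+b_2)u$. Let $\psi>0$ on $(0,L)$ be the principal eigenfunction for $\lambda^{*}:=\lambda_{N,D}(L;b_1,b_2)$, so $\mathcal M\psi=\lambda^{*}\psi$ with $\psi_x(0)=0$ and $\psi(L)=0$; note $\psi(0)>0$ (otherwise $\psi(0)=\psi_x(0)=0$ would force $\psi\equiv0$), and $\psi_x(L)\le0$ because $\psi\ge0$ vanishes at $L$. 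The Lagrange identity gives
$$\int_0^L\rho\psi\,\mathcal M\phi\,dx-\lambda^{*}\int_0^L\rho\phi\psi\,dx=\big[\rho(\phi_x\psi-\psi_x\phi)\big]_0^L.$$
At $x=L$ the bracket equals $-\rho(L)\psi_x(L)\phi(L)\ge0$ (using $\psi(L)=0$, $\phi(L)\ge0$, $\psi_x(L)\le0$), and at $x=0$ it equals $\rho(0)\phi_x(0)\psi(0)\le0$ (using $\psi_x(0)=0$, $\phi_x(0)\le0$, $\psi(0)>0$), so the right-hand side is $\ge0$. On the left, $\rho,\psi>0$ and $\mathcal M\phi\le0$ give $\int_0^L\rho\psi\,\mathcal M\phi\,dx\le0$; hence $\lambda^{*}\int_0^L\rho\phi\psi\,dx\le\int_0^L\rho\psi\,\mathcal M\phi\,dx\le0$, and since $\int_0^L\rho\phi\psi\,dx>0$ this forces $\lambda^{*}\le0$.

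The main obstacle is the asymptotic analysis in part (1): one must pin down $\lim_{L\to\infty}\lambda_{N,D}(L)=a$ carefully, and it is precisely there that the sign assumption $c<0$ is used in an essential way — mirroring the role of $c<2\sqrt a$ in Lemma \ref{nonexistence-lm1} — and one must also invoke the principal-eigenvalue theory to ensure $\lambda_{N,D}$ is simple, has a positive eigenfunction, and depends continuously on $L$. Part (2) is then routine once the signs of the Neumann–Dirichlet boundary terms are tracked correctly.
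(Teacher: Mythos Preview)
The paper does not actually prove this lemma: it merely ``recalls'' it from \cite{SaSh3}, so there is no in-paper argument to compare against. Your proposal supplies a complete and correct proof. Part (1) is handled cleanly by the self-adjoint reduction $\lambda_{N,D}(L)=a-\mu(L)$ and the variational characterization of $\mu(L)$; the limits $\mu(L)\to\infty$ as $L\to0^+$ and $\mu(L)\to0$ as $L\to\infty$ are justified exactly as you describe, and the hypothesis $c<0$ is used in precisely the right place (finiteness of $\int_0^\infty e^{cx}\,dx$ in the test-function computation). Part (2) is the standard Lagrange-identity argument, and your bookkeeping of the boundary terms at $x=0$ and $x=L$ under the mixed Neumann--Dirichlet conditions is correct, including the observations that $\psi(0)>0$ (by ODE uniqueness) and $\psi_x(L)\le 0$ (since $\psi>0$ on $(0,L)$ with $\psi(L)=0$). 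In short, you have filled in what the paper outsources to a citation, and done so correctly.
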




\begin{proof}[Proof of Theorem C]
We first consider the case that $0\le c<2\sqrt a$. Then there is $\lambda_0>0$ such that
$$
c^2-4 a+4\lambda_0<0.
$$
By Lemma \ref{nonexistence-lm1}(1), there is $L>0$ such that  $\lambda_D(L)=\lambda_0>0$.

Fix $0\le c<2\sqrt a$ and the above $L$.  Assume that   \eqref{Main-eq01} has a traveling wave solution
$(u,v_1,v_2)=(U(x-ct),V_1(x-ct),V_2(x-ct))$ with $(U(-\infty), V_1(-\infty),V_2(-\infty))=(\frac{a}{b},\frac{a\mu_1}{b\lambda_1},\frac{a\mu_2}{b\lambda_2})$ and
$(U(\infty),V_1(\infty),V_2(\infty))=(0,0,0)$.  Then, it follows that  \eqref{Main-eq1} has a stationary solution
$(u,v_1,v_2)=(U(x),V_1(x),V_2(x))$. Furthermore, we have  $(U(-\infty),V_1(-\infty),V_2(-\infty))=(\frac{a}{b},\frac{a\mu_1}{b\lambda_1},\frac{a\mu_2}{b\lambda_2})$ and  $(U(\infty),V_1(\infty),V_2(\infty))=(0,0,0)$.
Moreover,  for any $\epsilon>0$, this is $x_\epsilon>0$ such that
$$
0<U(x)<\epsilon, \quad 0<V_{1}(x)<\epsilon,\quad |V_{ix}(x)|<\epsilon \quad \forall \,\,i=1,2,\,\, x\ge x_\epsilon.
$$

Consider the eigenvalue problem,
\begin{equation}
\label{ev-eq6}
\begin{cases}
\phi_{xx}+(c+(\chi_2V_2-\chi_1 V_1)_x)\phi_x+(a+(\chi_2\lambda_2V_2-\chi_1\lambda_1V_1)(x))\phi\cr
 -((b+\chi_2\mu_2-\chi_1\mu_1)U(x))\phi=\lambda\phi,\quad x_\epsilon<x<x_\epsilon+L\cr
\phi(x_\epsilon)=\phi(x_\epsilon+L)=0.
\end{cases}
\end{equation}
Let $\lambda_D^\epsilon(L)$ be the principal eigenvalue of \eqref{ev-eq6}.
By Lemma \ref{nonexistence-lm1}(1) and perturbation theory for principal eigenvalues of elliptic operators,  $\lambda_D^\epsilon(L)>0$ for $0<\epsilon\ll 1$.

Note that
$$
U_{xx}+(c +(\chi_2V_2-\chi_1V_1)_{x})U_{x} + (a+(\chi_2\lambda_2V_2-\chi_1\lambda_1V)(x)-(b-\chi)U(x))U=0\quad \forall\,\, x_\epsilon\le x\le x_\epsilon+L
$$
and $U(x_\epsilon)>0$, $U(x_\epsilon+L)>0$. Then, by Lemma \ref{nonexistence-lm1}(2),  $\lambda_D^\epsilon(L)\le 0$. We get a contradiction.
Therefore,  \eqref{Main-eq01} has no traveling wave solution
$(u,v_1,v_2)=(U(x-ct),V_1(x-ct),V_2(x-ct))$ with $(U(-\infty),V_1(-\infty),V_2(-\infty))=(\frac{a}{b},\frac{a\mu_1}{b\lambda_1},\frac{a\mu_2}{b\lambda_2})$ and  $(U(\infty),V_1(\infty),V_2(\infty))=(0,0,0)$ and $0\le c<2\sqrt a$.

\smallskip

Next, we consider the case that $c<0$. Let $\lambda_0$ and $L$ be as in Lemma \ref{nonexistence-lm2}(1).
Then $\lambda_{N,D}(L)=\lambda_0>0$.

Fix $c<0$ and the above $L$.
 Assume that   \eqref{Main-eq01} has a traveling wave solution
$(u,v_1,v_2)=(U(x-ct),V_1(x-ct),V_2(x-ct))$ with $(U(-\infty)$, $V(-\infty))=(a/b,a/b)$ and
$(U(\infty),V(\infty))=(0,0)$.
Then, it follows that \eqref{Main-eq1} has a stationary solution
$(u,v_1,v_2)=(U(x),V_1(x),V_2(x))$ with $(U(-\infty),V_1(-\infty),V_2(-\infty))=(\frac{a}{b},\frac{a\mu_1}{b\lambda_1},\frac{a\mu_2}{b\lambda_2})$ and  $(U(\infty),V_1(\infty),V_2(\infty))=(0,0,0)$.
Similarly, for any $\epsilon>0$, this is $x_\epsilon>0$ such that
$$
0<U(x)<\epsilon, \quad 0<V_i(x)<\epsilon,\quad |V_{ix}(x)|<\epsilon \quad \forall \,\, i=1,2,\,\, x\ge x_\epsilon.
$$
Moreover,
since $U(\infty)=0$, there is $\tilde x_\epsilon>x_\epsilon$ such that
$$
U_x(\tilde x_\epsilon)<0.
$$

Consider the eigenvalue problem,
\begin{equation}
\label{ev-eq7}
\begin{cases}
\phi_{xx}+(c+(\chi_2V_2-\chi_1V_1)_x(x))\phi_x+(a+(\chi_2\lambda_2V_2-\chi_1\lambda_1V_1)(x))\phi\cr
 -((b+\chi_2\mu_2-\chi_1\mu_1)U(x))\phi=\lambda\phi,\quad \tilde x_\epsilon<x<\tilde x_\epsilon+L\cr
\phi_x(\tilde x_\epsilon)=\phi(\tilde x_\epsilon+L)=0.
\end{cases}
\end{equation}
 Let $\lambda_{N,D}^\epsilon(L)$ be the principal eigenvalue of \eqref{ev-eq7}.
By Lemma \ref{nonexistence-lm2}(1) and perturbation theory for principal eigenvalues of elliptic operators,  $\lambda_{N,D}^\epsilon(L)>0$ for $0<\epsilon\ll 1$.

Note that
$$
U_{xx}+(c +(\chi_2V_2-\chi_1V_1)_{x})U_{x} + (a+(\chi_2V_2-\chi_1V_1)(x)-(b+\chi_2\mu_2-\chi_1\mu_1)U(x))U=0\quad \forall\,\, \tilde x_\epsilon\le x\le\tilde  x_\epsilon+L
$$
and $U_x(\tilde x_\epsilon)<0$, $U(\tilde x_\epsilon+L)>0$. Then, by Lemma \ref{nonexistence-lm2}(2),  $\lambda_{N,D}^\epsilon(L)\le 0$. We get a contradiction.
Therefore,  \eqref{Main-eq01} has no traveling wave solution
$(u,v_1,v_2)=(U(x-ct),V_1(x-ct),V_2(x-ct))$ with $(U(-\infty),V_1(-\infty),V_2(-\infty))=(\frac{a}{b},\frac{a\mu_1}{b\lambda_1},\frac{a\mu_2}{b\lambda_2})$ and  $(U(\infty),V_1(\infty),V_2(\infty))=(0,0,0)$ and $c<0$.

Theorem C is thus proved.
\end{proof}

\end{document}